\numberwithin{equation}{section}
\def\s{$\mbox{\c{s}}$}
\def\opn#1#2{\def#1{\operatorname{#2}}} 
\opn\Ker{Ker} \opn\Coker{Coker}  \opn\Hom{Hom} \opn\Im{Im}
\opn\End{End} \opn\Aut{Aut} \opn\defect{def} \opn\ord{ord}
\opn\id{id} \opn\dim{dim} \opn\det{det} \opn\tr{tr} \opn\grad{grad} \opn\lcm{lcm}
\opn\min{min} \opn\max{max} 
\opn\Span{Span}   \opn\rang{rang}  \opn\id{id} \opn\Ass{Ass} \opn\Min{Min}
\opn\GL{GL} \opn\SL{SL} \opn\mod{mod} \opn\diag{diag}
\opn\min{min} \opn\sgn{sgn} \opn\ini{in_<}  \opn\Mon{Mon} \opn\LC{LC_<} \opn\Hom{Hom} \opn\Ext{Ext} \opn\gini{gin_{<_{rev}}} \opn\gin{gin_{<}}
\opn\LT{LT_<}
\opn\s{supp} \opn\Tor{Tor} \opn\link{link} \opn\depth{depth} \opn\pd{pd} \opn\reg{reg} 
\newcommand{\der}{\textup{d}}
\date{}
\title{A note on instantaneous gelation for coagulation kernels vanishing on the diagonal}
\author[a,1]{Iulia Cristian}
\author[b,2]{Barbara Niethammer}
\author[b,3]{Juan J. L. Vel\'{a}zquez}
\affil[a]{Laboratoire Jacques-Louis Lions, Sorbonne University, 4 Place Jussieu, 75005 Paris, France}
\affil[b]{Institute for Applied Mathematics, University of Bonn, Endenicher Allee 60, 53115 Bonn, Germany}
\affil[1]{\href{mailto:cristian@iam.uni-bonn.de}{iulia.cristian@sorbonne-universite.fr}}
\affil[2]{\href{mailto:niethammer@iam.uni-bonn.de}{niethammer@iam.uni-bonn.de}}
\affil[3]{\href{mailto:velazquez@iam.uni-bonn.de}{velazquez@iam.uni-bonn.de}}
\begin{document}
\maketitle
\begin{abstract}
 We prove that instantaneous gelation (i.e., instantaneous loss of mass) occurs for coagulation equations with sum-type kernels of homogeneity greater than one which vanish on the diagonal. Our proof includes solutions that are Radon measures if we exclude the case of initial data that are a single Dirac delta.   


\end{abstract}

\textbf{Keywords:} coagulation equations, instantaneous gelation, kernels vanishing on the diagonal.

\textbf{MSC codes:} 34A34, 45J05, 82C22, 82D60.

\newtheorem{teo}{Theorem}[section]
\newtheorem{ex}[teo]{Example}
\newtheorem{prop}[teo]{Proposition}
\newtheorem{obss}[teo]{Observations}
\newtheorem{cor}[teo]{Corollary}
\newtheorem{lem}[teo]{Lemma}
\newtheorem{prob}[teo]{Problem}
\newtheorem{conj}[teo]{Conjecture}
\newtheorem{exs}[teo]{Examples}
\newtheorem{alg}[teo]{\bf Algorithm}
\newtheorem{assumption}[teo]{Assumption}
\theoremstyle{definition}
\newtheorem{defi}[teo]{Definition}

\theoremstyle{remark}
\newtheorem{rmk}[teo]{Remark}
\newtheorem{ass}[teo]{Assumption}

\parindent0mm




\section{Introduction}
\subsection{Background}
It  is well-known that gelation (loss of mass) occurs for the standard coagulation equation, 
\begin{align}\label{non-existence model}
    \partial_{t}f(v,t)=\frac{1}{2}\int_{(0,v)}K(v-v',v')f(v-v',t)f(v',t)\der v'-\int_{(0,\infty)}K(v,v')f(v,t)f(v',t)\der v',
\end{align}
when the coagulation kernel behaves like a power law of homogeneity $\gamma>1$ (see, for example \cite{gelationpaper,papergelationlaurencot, gelfournier2025}). In particular, for sum kernels of the form
\begin{align}\label{sum kernel}
K(v,v')=v^{\gamma}+v'^{\gamma},
\end{align}
with $\gamma>1$, gelation happens instantaneously. Actually, making use of this property, one can prove that solutions which belong to $L^1$ for the standard coagulation equation do not exist for any positive time for kernels as in \eqref{sum kernel} (see \cite{ballcarr,Carr1992,Dongen,bookcoagulation}). 
Our main result is to extend the instantaneous gelation phenomenon to hold for sum kernels of homogeneity larger than one which vanish on the diagonal, i.e., $K(v,v)=0$ for all $v\in(0,\infty)$, and to the Radon measures context. For more details, see Assumption \ref{assinstgelation}. Moreover, our result is valid for coagulation kernels that behave like sum kernels of homogeneity greater than one only for large values of $v$ and $v'$.

The kernels that constitute the main subject of study for gelation theory are kernels of the form 
(\ref{sum kernel}) with $\gamma>1$ (for which solutions of (\ref{non-existence model}) do not exist) or product kernels (for which solutions of (\ref{non-existence model}) exist, but they do not conserve mass). For a more detailed description, see the end of \cite[Section 9.2]{bookcoagulation} and \cite{gelationpaper}. Kernels which vanish on the diagonal of homogeneity $\gamma>1$, while having a physical interest, see \cite{raininitiation, bubblesdropsparticles, precipitation, physicspaper}, have not yet been studied in the mathematical literature to the best of our knowledge.

A first observation is that Dirac measures of the form $f(t)\equiv M\delta_{V}$, for some $M,V\in(0,\infty)$, for all $t\in[0,\infty)$ are mass-conserving solutions to \eqref{non-existence model} when the kernel vanishes on the diagonal. Therefore, if at $t=0$ we have a Dirac measure, i.e., if $f(0)= M\delta_{V}$, for some $M,V\in(0,\infty)$, then there exist mass-conserving solutions to \eqref{non-existence model}.

However, if $f(0)\neq M\delta_{V}$, for some $M,V\in(0,\infty)$, we can prove positivity of solutions everywhere for positive times and then adapt existing methods used in the study of coagulation equations in order to prove that instantaneous gelation occurs.

The motivation behind our result is that in \cite{physicspaper} a space-dependent coagulation equation is analyzed in order to model the onset of rain and the behavior of air bubbles in water which move due to buoyancy. Here spherical particles of volume $v$ move in space vertically and merge when their trajectories cross. This leads to a spatially inhomogeneous coagulation model for the density of  particles of size $v$ at the point in space $x$ with a  so-called differential sedimentation  kernel of the form
\begin{align}\label{rain kernel}
    K(v,v')=|v^{\frac{2}{3}}-v'^{\frac{2}{3}}|(v^{\frac{1}{3}}+v'^{\frac{1}{3}})^{2}.
\end{align}


The reason the kernel in \eqref{rain kernel} vanishes on the diagonal line $\{v=v'\}$ is because the merging of the particles is due to sedimentation and particles with the same volume fall at the same speed. This opens the question of whether we can derive some information from the study of solutions of the coagulation model with kernels of homogeneity greater than one which vanish on the diagonal in the spatially homogeneous case \eqref{non-existence model} or whether such solutions do not exist.

Our main goal in this paper is thus to show
that there is no mass conserving solution  for any positive time for \eqref{non-existence model} with \eqref{rain kernel}. This provides a mathematical motivation for the further study of the space-dependent model presented in  \cite{physicspaper}. Mass-conserving solutions of the spatially inhomogeneous model are then discussed in \cite{sedimentationkernelrainmodel}.


\subsection{Main result - Instantaneous gelation}

\begin{assumption}\label{assinstgelation}
We assume that the kernel $K\colon [0,\infty)\times [0,\infty) \to [0,\infty)$ satisfies
\begin{align}\label{rain kernel general}
    K(v,v')=h(v,v')g\Big(\frac{v}{v+v'}\Big),
\end{align}
where $g \in \textup{C}([0,\infty))$ and $h\in\textup{C}([0,\infty)^2)$ satisfy the following properties.
The function  $h$ is symmetric and there exist $H>1$  and $H_{0}>0$ such that for some $\gamma >1$ there holds
    \begin{align}
H_{0}(v^{\gamma}+v'^{\gamma})\leq h(v,v') \qquad \mbox{ if } \qquad v+v'>H
\label{constant H}
\end{align}
and 
\begin{align}\label{positivity of kernel}
\min_{v,v'\in[\frac{1}{R},R]}h(v,v')\geq C_{R}>0, \textup{ for every } R>0.
\end{align}
Additionally, there exists a constant $H_{1}>0$ such that
\begin{align}\label{upper bound small h}
 h(v,v')\leq H_{1}(v^{\gamma}+v'^{\gamma}),
 \end{align}
for all $v,v'\in[0,\infty)$. 

The function $g$ satisfies $g(x)=g(1-x)$, $g(\frac{1}{2})=0$ and 
\begin{align}\label{definition of g}
G_{0}\big|x-\tfrac{1}{2}\big|^{k}\leq g(x)\leq G_{1},
\end{align}
for some constants $G_{0},G_{1}>0$, for all $x\in[0,1]$ and for some $k\geq 1$. 
\end{assumption}

\begin{rmk}One can see that kernels of the form (\ref{rain kernel general}) include the particular case of kernels of the form 
\eqref{rain kernel} and kernels of the form 
\begin{align*}
    K(v,v')=|v^{d_{1}}-v'^{d_{1}}|(v^{d_{2}}+v'^{d_{2}}) \qquad \mbox{ or } \qquad 
    K(v,v')=|v-v'|^{d_{1}}(v^{d_{2}}+v'^{d_{2}}),
\end{align*}
for some $d_{1},d_{2}\geq 0$ with $d_{1}+d_{2}>1$.
\end{rmk}
\subsubsection*{Some notation}
\begin{itemize}
    \item We will denote by  $\mathscr{M}_{+}((0,\infty))$  the space of non-negative Radon measures
and by \begin{align*}
    \mathscr{M}_{+}((0,\infty),1+v^{\beta})&:=\bigg \{f\in\mathscr{M}_{+}((0,\infty))| \int_{(0,\infty)}(1+v^{\beta})f(v)\der v<\infty\bigg\}.
\end{align*}
    \item Here and in the following, with some abuse of notation, we write $f(v)\der v$ for Radon measures, independently of the fact whether the measure is absolutely continuous with respect to the Lebesgue measure.
    \item We use the notation $C(a_{1}\ldots,a_{n})$ for a constant depending on the set of variables $(a_{1},\ldots,a_{n})$, for $n\in\mathbb{N}$, which may differ from line to line.
\end{itemize}

\begin{defi}\label{definition solution}
Let $T\in(0,\infty]$. We call $f\in\textup{C}([0,T];\mathscr{M}_{+}((0,\infty),1+v^{\gamma}))$ a weak solution of (\ref{non-existence model}) with kernel (\ref{rain kernel general}) if
\begin{align}\label{upper bound for solution}
\sup_{t\in[0,T]}\int_{(0,\infty)}(1+v^{\gamma})f(v,t)\der v\leq C(T)
\end{align}
 and 
\begin{align}
  \int_{(0,\infty)}f(v,t)\varphi(v)\der v&- \int_{(0,\infty)}f_{\textup{in}}(v)\varphi(v)\der v\nonumber\\
  &=\frac{1}{2}\int_{0}^{t}\int_{(0,\infty)}\int_{(0,\infty)}K(v,v')\Theta_{\varphi}(v,v')f(v,s)f(v',s)\der v\der v' \der s,\label{weak formulation equation solution}
\end{align}
for every $\varphi\in\textup{C}_{\textup{c}}((0,\infty))$ and for every $t\in[0,T]$, where
\begin{align*}
        \Theta_{\varphi}(v,v')&:=\varphi(v+v')-\varphi(v)-\varphi(v').
\end{align*}
\end{defi}
\begin{defi}\label{defi mass conserving solution}
Let $T>0$.    We call $f\in\textup{C}([0,T];\mathscr{M}_{+}((0,\infty)))$ a mass-conserving solution of (\ref{non-existence model}) with kernel (\ref{rain kernel general}) if $f$ is as in Definition \ref{definition solution} and satisfies in addition 
    \begin{align*}
        \int_{(0,\infty)}vf(v,t)\der v=\int_{(0,\infty)}vf_{\textup{in}}(v)\der v,
    \end{align*}
    for all $t\in[0,T].$
\end{defi}
\begin{rmk}\label{remark intro mass cons dirac}
Note that for  $M,V\in(0,\infty)$ the measure $f=M\delta_{V}$ is a mass-conserving solution for (\ref{non-existence model}) with $K$ as in  (\ref{rain kernel general}). Thus, in order to prove instantaneous gelation we need to assume that our initial data is not a single Dirac measure. We refer to \cite{paperinventiones, Escobedo2014} for positivity arguments in similar kinetic equations. 
\end{rmk}
\begin{teo}\label{theorem instantaneous gelation}
Let $K$ satisfy Assumption \ref{assinstgelation}. If $f_{\textup{in}}\neq M\delta_{V}$, for some $M,V\in(0,\infty)$, then there is no mass-conserving solution for (\ref{non-existence model}) as in Definition \ref{defi mass conserving solution} for any time interval $[0,T]$, with $T>0$.
\end{teo}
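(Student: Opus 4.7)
I would argue by contradiction: suppose $f \in \textup{C}([0,T];\mathscr{M}_{+}((0,\infty)))$ is a mass-conserving solution on some interval $[0,T]$ with $T>0$, and aim at a contradiction with the $\gamma$-moment bound \eqref{upper bound for solution}. The approach has two ingredients: a \emph{positivity and off-diagonal spreading} step, exploiting that $f_{\textup{in}}$ is not a single Dirac in order to show that $f(\cdot,t)$ acquires mass at arbitrarily large sizes immediately, and a \emph{moment blow-up} step in the spirit of the classical gelation theory for sum kernels with $\gamma>1$ (see \cite{ballcarr,Carr1992,Dongen,gelationpaper,papergelationlaurencot}), adapted to the Radon-measure setting and to kernels vanishing on the diagonal.

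\textbf{Positivity and off-diagonal mass creation.} Since $f_{\textup{in}} \neq M\delta_{V}$, I may select two disjoint compact intervals $I_{1},I_{2} \subset (0,\infty)$, strictly separated with $\sup I_{1}<\inf I_{2}$, and $\mu>0$ such that $f_{\textup{in}}(I_{j}) \geq \mu$ for $j=1,2$. Testing \eqref{weak formulation equation solution} against non-negative $\varphi_{j} \in \textup{C}_{\textup{c}}((0,\infty))$ with $\chi_{I_{j}} \leq \varphi_{j}\leq 1$ and bounding the loss term by means of \eqref{upper bound for solution}, \eqref{upper bound small h}, and \eqref{definition of g}, one obtains a differential inequality of the form
\begin{equation*}
\frac{\der}{\der t}\int \varphi_{j}\, f(\cdot,t)\,\der v \;\geq\; -C \int \varphi_{j}\, f(\cdot,t)\,\der v,
\end{equation*}
with $C$ depending on $T$ and $\supp\varphi_{j}$, so that Gronwall gives $\int_{I_{j}} f(\cdot,t)\der v \geq \mu\, e^{-CT}$ on $[0,T]$ (cf.\ the positivity arguments in \cite{paperinventiones,Escobedo2014}). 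Because $I_{1}$ and $I_{2}$ are strictly separated, $v/(v+v')$ stays uniformly away from $\tfrac{1}{2}$ on $I_{1}\times I_{2}$, so \eqref{definition of g} and \eqref{positivity of kernel} give $K(v,v')\geq K_{0}>0$ there. Coagulation therefore produces mass on $I_{1}+I_{2}$ at a positive rate; iteratively pairing the newly-created mass (still strictly separated from $I_{2}$) with $I_{2}$, one populates $I_{1}+kI_{2}$ for every $k\in\mathbb{N}$, so that for every $t>0$ the measure $f(\cdot,t)$ has positive mass at arbitrarily large sizes.

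\textbf{Moment blow-up and main obstacle.} I would then test \eqref{weak formulation equation solution} against a smoothly truncated version of $v^{\alpha}$ for some $\alpha\in(1,\gamma)$ close to $1$, pass to the limit in the cut-off, and combine the pointwise bound
\begin{equation*}
(v+v')^{\alpha} \;\geq\; v^{\alpha} + v'^{\alpha} + c_{\alpha}\bigl(v\, v'^{\alpha-1} + v^{\alpha-1} v'\bigr)
\end{equation*}
with the off-diagonal lower bound $K(v,v')\geq c(v^{\gamma}+v'^{\gamma})$ (from \eqref{constant H} and the quantitative $g$-bound inherited from the previous step) to derive an ODI of the form $\tfrac{\der}{\der t}M_{\alpha} \geq c\, M_{1}\, M_{\gamma+\alpha-1} \geq c'\, M_{\alpha}^{1+\delta}$, with $\delta=(\gamma-1)/(\alpha-1)>0$, via H\"older's inequality and the constancy of $M_{1}$. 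Since the previous step provides mass at arbitrarily large sizes, $M_{\alpha}(t_{0})$ can be made arbitrarily large at any preassigned $t_{0}>0$, so the blow-up time of this ODI can be made smaller than $T-t_{0}$, contradicting the a priori bound \eqref{upper bound for solution} (which, by interpolation between $M_{1}$ and $M_{\gamma}$, forces $M_{\alpha}$ to remain finite on $[0,T]$). The principal obstacle is the spreading step, namely quantitative positivity in the Radon-measure setting combined with the iterative bootstrap to arbitrarily large sizes; a secondary technicality is the careful cut-off of $v^{\alpha}$ in the weak formulation needed to derive the ODI rigorously, controlling the error terms near $v=0$ and $v=\infty$.
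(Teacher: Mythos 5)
Your positivity/spreading step matches the paper's strategy (two separated mass packets preserved by a Gronwall argument, then iterative pairing to populate arbitrarily large sizes), and your observation that choosing $\alpha<\gamma$ would avoid having to prove finiteness of higher moments is a genuine simplification. But the moment blow-up step has a gap that cannot be closed in the form you propose. The assertion that ``$M_{\alpha}(t_{0})$ can be made arbitrarily large at any preassigned $t_{0}>0$'' is not meaningful: for the fixed putative solution, $M_{\alpha}(t_{0})$ is a single finite number (indeed $M_{\alpha}\leq M_{1}+M_{\gamma}<\infty$, by your own interpolation remark and \eqref{upper bound for solution}). What the spreading step actually yields is $\int_{[R,\infty)}f(v,t_{0})\,\der v\geq C(t_{0},R)>0$ for every $R$, hence $M_{\alpha}(t_{0})\geq R^{\alpha}C(t_{0},R)$; but $C(t_{0},R)$ comes from an iteration over roughly $R/x_{1}$ coagulation events and may decay arbitrarily fast in $R$, so this lower bound does not force $M_{\alpha}(t_{0})$ above the threshold $(c'\delta(T-t_{0}))^{-1/\delta}$ needed for the ODI $\tfrac{\der}{\der t}M_{\alpha}\geq c'M_{\alpha}^{1+\delta}$ to blow up before time $T$. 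With $\alpha$, hence $\delta=(\gamma-1)/(\alpha-1)$, fixed, no contradiction follows.

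The paper's way around this is precisely to let the moment order tend to infinity: it works with $M_{R,m}(t)=\int_{[R,\infty)}(v-R)^{m}f(v,t)\,\der v$ and derives $T-t\leq C\,M_{R,m}(t)^{-(\gamma-1)/(m-1)}$. Writing $M_{R,m}^{-(\gamma-1)/(m-1)}=\bigl(M_{R,m}^{1/m}\bigr)^{-m(\gamma-1)/(m-1)}$ and using $M_{R,m}(t)^{1/m}\geq R\,C(t,2R)^{1/m}\to R$ as $m\to\infty$, the $m$-th root washes out the uncontrollably small positivity constant, and letting $m\to\infty$ and then $R\to\infty$ gives $T\leq t$, the desired contradiction. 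This requires testing with $(v-R)_{+}^{m}$ for large $m$, hence the finiteness of all moments $M_{m}$, $m\geq 2$, before $T_{\textup{gel}}$ (obtained in the paper from the exponential tail estimate $I_{R}(t)\leq v_{0}\textup{e}^{-Cv_{0}R^{\gamma-1}(T-t)}$), a step you would need to restore. A secondary point: the ODI $\tfrac{\der}{\der t}M_{\alpha}\geq cM_{1}M_{\gamma+\alpha-1}$ cannot be obtained over the whole product space, since the kernel lower bound fails on the diagonal; one must restrict to a region such as $(0,R/2]\times[R,\infty)$ and use that small particles retain at least half of the conserved mass (Dini's theorem), as the paper does --- you gesture at this, but the restriction replaces $M_{1}M_{\gamma+\alpha-1}$ by the shifted truncated moments that the paper actually uses.
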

From Remark \ref{remark intro mass cons dirac} and Theorem \ref{theorem instantaneous gelation}, we immediately deduce that
\begin{cor}
    Let $K$ satisfy Assumption \ref{assinstgelation}. Equation (\ref{non-existence model}) admits mass-conserving solutions as in Definition \ref{defi mass conserving solution} if and only if $f_{\textup{in}}= M\delta_{V}$, for some $M,V\in(0,\infty)$.
\end{cor}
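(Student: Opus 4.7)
I would argue by contradiction: suppose a mass-conserving solution $f$ of \eqref{non-existence model} exists on some $[0,T]$ with $T>0$ and $f_{\textup{in}}\neq M\delta_V$, and aim for a contradiction with the uniform moment bound \eqref{upper bound for solution}. Following the hint in the introduction, the argument combines a new positivity/spreading property of $f(t)$ (needed because of the vanishing of $K$ on the diagonal) with an adaptation of the classical moment blow-up argument used to prove instantaneous gelation for standard sum kernels as in \cite{ballcarr,Carr1992,Dongen,bookcoagulation}.

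\textbf{Step 1 (positivity/spreading).} Since $f_{\textup{in}}\neq M\delta_V$, one can pick two disjoint compact intervals $J_1,J_2\subset(0,\infty)$, separated by a positive gap, with $f_{\textup{in}}(J_j)>0$ for $j=1,2$. The gap ensures $|v/(v+v')-\tfrac12|$ is bounded away from $0$ on $J_1\times J_2$, so by \eqref{definition of g} and \eqref{positivity of kernel} we have $K(v,v')\geq c_0>0$ on $J_1\times J_2$. Writing \eqref{non-existence model} in Duhamel/mild form and plugging this lower bound into the gain term shows that $f(t)$ acquires strictly positive mass in a neighbourhood of every point $v_1+v_2$ with $v_j\in J_j$. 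Iterating the creation mechanism (coupling newly produced clusters with old ones) one shows that for every $t_0\in(0,T]$ the measure $f(t_0)$ carries positive mass on arbitrarily many well-separated compact intervals.

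\textbf{Step 2 (the kernel behaves like a sum kernel on the relevant regions).} On any pair of disjoint intervals $I_1,I_2$ produced by Step 1, the separation gives $|v/(v+v')-\tfrac12|\geq\delta_0>0$ for $(v,v')\in I_1\times I_2$, so $g(v/(v+v'))\geq G_0\delta_0^k$. Combining with \eqref{constant H} and \eqref{positivity of kernel} one obtains
\begin{equation*}
K(v,v')\geq c_1(v^\gamma+v'^\gamma)\qquad\text{for all }(v,v')\in (I_1\times I_2)\cup(I_2\times I_1),
\end{equation*}
so that, when we restrict the integral in \eqref{weak formulation equation solution} to these regions, the kernel behaves exactly as a classical sum kernel \eqref{sum kernel}.

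\textbf{Step 3 (moment blow-up, adapted).} With Step 2 in hand, I would follow the standard moment approach. Testing the weak formulation with a smooth truncation of $\varphi(v)=v^\alpha$ for $\alpha>1$, restricting the right-hand side to the regions of Step 2, and using the elementary inequality
\begin{equation*}
(v+v')^\alpha-v^\alpha-v'^\alpha\geq(\alpha-1)\max(v,v')^{\alpha-1}\min(v,v'),\qquad\alpha\geq 1,
\end{equation*}
together with mass conservation $M_1(t)\equiv M_1(0)$ and log-convexity of moments, one derives a differential inequality of the form
\begin{equation*}
\frac{\der}{\der t}M_\alpha(t)\geq C(\alpha-1)\,M_1^{1-\mu_\alpha}\,M_\alpha(t)^{1+\mu_\alpha},\qquad\mu_\alpha=\frac{\gamma-1}{\alpha-1},
\end{equation*}
valid on $[t_0,T]$. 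Standard ODE comparison yields finite-time blow-up of $M_\alpha$, and for $\alpha\in(1,\gamma)$ the interpolation bound $M_\alpha(t)\leq M_1^{(\gamma-\alpha)/(\gamma-1)}M_\gamma(t)^{(\alpha-1)/(\gamma-1)}$ derived from \eqref{upper bound for solution} would then be violated, giving the desired contradiction.

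\textbf{Main obstacle.} The delicate part is promoting the blow-up to be genuinely \emph{instantaneous}, i.e.\ obtaining a contradiction for every $T>0$ rather than only for $T$ larger than some $T_0(f_{\textup{in}})$. The quantitative form of Step 1 is crucial here: one must track how much mass is transferred into new disjoint intervals during a short interval $[0,t_0]$ and how this mass lower-bounds $M_\alpha(t_0)$ in an $\alpha$-dependent way, so that the natural slowdown of the ODE blow-up time as $\alpha\to\infty$ can be beaten by choosing $\alpha=\alpha(T)$ large enough. The careful bookkeeping of the Duhamel iteration constants in Step 1, together with the interplay between the degeneracy of $g$ near the diagonal and the $\alpha$-dependence of the constants in Step 3, is where the bulk of the technical work lies.
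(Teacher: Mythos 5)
Your overall architecture for the hard direction (positivity/spreading plus an adapted sum-kernel moment argument) matches the paper's, but there are concrete gaps. First, the statement is an ``if and only if'' and you only treat one implication: you never observe that for $f_{\textup{in}}=M\delta_{V}$ the constant-in-time measure $M\delta_{V}$ \emph{is} a mass-conserving solution precisely because $K(V,V)=0$; this trivial direction is half of the corollary. Second, and more seriously, your Steps 2--3 restrict the collision integral to $(I_{1}\times I_{2})\cup(I_{2}\times I_{1})$ for \emph{compact} disjoint intervals produced by the spreading argument. Such a restriction only sees the restriction of $f$ to $I_{1}\cup I_{2}$, whose $\alpha$-moments are trivially bounded, so it cannot produce the superlinear right-hand side $cM_{\alpha}^{1+\mu_{\alpha}}$ with the full moment $M_{\alpha}$: the differential inequality in Step 3 does not follow from the region you chose. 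The correct region is scale-separated rather than compact, e.g.\ $(0,R/2]\times[R,\infty)$ as in the paper, where $v\leq v'/2$ forces $v/(v+v')\leq 1/3$ and hence $g\geq G_{0}/6^{k}$ automatically; the role of your Step 1 is not to supply the region for the moment estimate but to guarantee (via the iterated creation of mass near $Nx_{1}+x_{2}$) that $f(t)$ has positive mass beyond every threshold $R$ for every $t>0$.

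Third, the step you defer as the ``main obstacle'' is exactly where the proof lives, and your heuristic for it is not yet an argument. The paper's resolution is quantitative: positivity beyond every $R$ gives $\liminf_{m\to\infty}M_{R,m}(f(t))^{1/m}\geq R$ for the shifted moments $M_{R,m}(f(t))=\int_{[R,\infty)}(v-R)^{m}f(v,t)\,\der v$, while the ODE for $M_{R,m}$ yields $T-t\leq CM_{R,m}(t)^{-(\gamma-1)/(m-1)}$ with $C$ independent of $m$ and $R$; letting $m\to\infty$ and then $R\to\infty$ gives $T\leq t$ for every $t>0$. Your sketch instead contradicts the finiteness of $M_{\gamma}$ via interpolation, which only rules out existence past some finite blow-up time unless the $\alpha$- and $t_{0}$-dependence of the constants is controlled — precisely the bookkeeping you acknowledge but do not perform. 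A related omission: before testing with unbounded weights such as $(v-R)_{+}^{m}$ one must know $\sup_{[0,T]}M_{m}(f(t))<\infty$ for all $m$ and all $T<T_{\textup{gel}}$ (the paper's Proposition \ref{all large moments finite}, proved via the exponential tail decay $I_{R}(t)\leq v_{0}\textup{e}^{-Cv_{0}R^{\gamma-1}(T-t)}$); your appeal to ``smooth truncations'' does not by itself justify passing to the limit in the superlinear gain term.
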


The proof of Theorem \ref{theorem instantaneous gelation} consists of two steps. First, we prove positivity of solutions everywhere for positive times when we exclude a single Dirac measure at the initial time via an iterative argument. We then follow the approach used in \cite[Section 9]{bookcoagulation} to show that instantaneous gelation occurs. Our arguments rely on the fact that the main contribution to equation (\ref{non-existence model}) is given by the interaction between particles of different sizes, which allows us to work with kernels which vanish on the diagonal. 
\subsection*{Discussion on solutions that do not conserve mass}
 Notice that Definition \ref{definition solution} implies that our solution needs to have finite $\gamma$ moment. This implies that we can make use of Lebesgue's dominated convergence theorem in order to prove that there is no solution for equation (\ref{non-existence model}). We refer to \cite{ballcarr}  and \cite[Section 9.2]{bookcoagulation} for a detailed proof.  More precisely, by \eqref{upper bound for solution} and using Lebesgue's dominated converge theorem, one can prove that \begin{align*}
    \int_{0}^{t}\int_{(R,\infty)}\int_{(0,\infty)}(v^{\gamma}v'+v'^{\gamma}v)f(v,s)f(v',s)\der v'\der v \der s\rightarrow 0
\end{align*}
 as $R\rightarrow\infty$. Then by Assumption \ref{assinstgelation}, we have that there exists a constant $C>0$ such that $K(v,v')\leq C(v^{\gamma}+v'^{\gamma}).$ We are now able to redo the proof of Corollary 9.2.6 in \cite{bookcoagulation}, in order to prove that
 \begin{align*}
     \int_{(0,R)}v[f(v,t)-f_{\textup{in}}]\der v\rightarrow 0
 \end{align*}
 as $R\rightarrow\infty$, which implies that, if a solution to \eqref{non-existence model} exists, it needs to be mass-conserving.
This provides a contradiction since, due to Theorem \ref{theorem instantaneous gelation}, instantaneous gelation occurs.

In other words, our result actually states that the equation \eqref{non-existence model} does not admit solutions as in Definition \ref{definition solution} at all if at initial time we exclude the case of a Dirac measure, independently of the fact that these are mass-conserving or that they do not preserve mass.

However, our definition of solutions requires a quite strong condition on the finiteness of the $\gamma$-moment of solutions, see \eqref{upper bound for solution}. An alternative way to define solutions would be to require that the term involving the coagulation kernel is finite, i.e., we have that
 \begin{align}\label{coagulation kernel term finite}
  \int_{0}^{t}\int_{(0,\infty)}\int_{(0,\infty)}K(v,v')f(v,s)f(v',s)\der v\der v' \der s<\infty,
 \end{align}
 for any $t\in[0,T].$ We do not claim here that condition \eqref{coagulation kernel term finite} implies condition \eqref{upper bound for solution}, nor that our results hold under the weaker assumption \eqref{coagulation kernel term finite}.
\section{Proof of Theorem \ref{theorem instantaneous gelation}}\label{instantaneous gelation section}

In this section we prove instantaneous gelation for kernels of the form (\ref{rain kernel general}), keeping in mind that the results apply in particular to the kernel in (\ref{rain kernel}). We first prove positivity of solutions everywhere for positive times. This is valid since, if we exclude a single Dirac measure at the initial time, we are able to find two disjoint sets $A,B$ in the space of particle volume of positive measure, i.e., $f(A,t)>0$ and $f(B,t)>0$, for all $t\geq 0$, which are sufficiently far away. We then use an iterative argument to prove that for any point $R\in(0,\infty)$ we are able to find a sufficiently large neighbourhood of $R$ which has positive measure. This holds true since, due to the form of equation \eqref{weak formulation equation solution}, interactions between particles of smaller size contribute to the formation of larger particles. Moreover, since we can work with clusters of different sizes in order to create the necessary contribution, the fact that the coagulation kernel vanishes on the diagonal does not impose problems.

We then follow the proof in \cite[Section 9.2]{bookcoagulation} (see also \cite{Carr1992,Dongen}) in order to show that the gelation time is zero. The proof relies also in this case on the fact that the main contribution is given by particles of different sizes. Thus, we are able to adapt the proof to our framework (i.e., to kernels vanishing on the diagonal) by restricting to a region far away from the diagonal. In order to avoid this region, instead of analyzing in detail the behavior of moments of order $m$, with $m>2$, as in \cite{bookcoagulation}, we need to analyze the behavior of these moments for large particles.

Notice that the case $g=G_{1}$ in (\ref{definition of g}) is the case analyzed in \cite[Section 9.2]{bookcoagulation}, while our kernel in (\ref{rain kernel general}) also contains kernels vanishing on the diagonal and thus kernels of the form (\ref{rain kernel}).

Our result states that instantaneous gelation still takes place for kernels that behave like sum kernels only near infinity and extend the results in \cite[Section 9.2]{bookcoagulation} to hold for kernels vanishing on the diagonal. Moreover, we prove our results in the context of Radon measures instead of $L^{1}$ functions.

\begin{prop}\label{initial condition is not a dirac}
    If $f_{\textup{in}}\neq M\delta_{V}$, for some $M,V\in(0,\infty)$, we have that there exist $x_{1},x_{2}\in(0,\infty)$ and $\eta_{0}\in(0,1)$ such that for every $0<\eta\leq\eta_{0}$, we have that $A:=\textup{B}(x_{1},\eta)$ and $B:=\textup{B}(x_{2},\eta)$, with $A\cap B=\emptyset$,  $\textup{dist}(A,B)\geq \eta_{0}>0$, satisfy
\begin{align}\label{sets with positive mass}
    \int_{A}f_{\textup{in}}(v)\der v>0 \textup{ and }
   \int_{B}f_{\textup{in}}(v)\der v>0. \end{align}
\end{prop}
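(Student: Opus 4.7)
The plan is to reduce the claim to an elementary structural fact about non-negative Radon measures on $(0,\infty)$: any such measure whose support consists of a single point $V\in(0,\infty)$ is necessarily a positive multiple of $\delta_V$. Given this, the proposition is a short topological consequence of the hypothesis $f_{\textup{in}}\neq M\delta_V$, together with the definition of the support.

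The first step would be to set $S:=\textup{supp}(f_{\textup{in}})\subset(0,\infty)$ and show that $S$ contains at least two distinct points. We may assume $f_{\textup{in}}\not\equiv 0$ (otherwise the statement of the main theorem is vacuous for mass-conserving solutions). Suppose toward a contradiction that $S=\{V\}$ for some $V\in(0,\infty)$. Then $f_{\textup{in}}$ vanishes on every compact subset of $(0,\infty)\setminus\{V\}$, so by local finiteness of Radon measures $f_{\textup{in}}$ is concentrated at $V$ and coincides with $M\delta_V$ for some $M>0$, contradicting the assumption. Hence one can choose $x_1,x_2\in S$ with $x_1\neq x_2$.

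Next I would set
\[
\eta_0:=\tfrac{1}{4}\min\bigl\{x_1,\,x_2,\,|x_1-x_2|\bigr\}>0,
\]
and define $A:=\textup{B}(x_1,\eta)$ and $B:=\textup{B}(x_2,\eta)$ for $0<\eta\leq\eta_0$. By the choice of $\eta_0$, both balls are contained in $(0,\infty)$, and
\[
\textup{dist}(A,B)\geq |x_1-x_2|-2\eta\geq 2\eta_0\geq \eta_0,
\]
so in particular $A\cap B=\emptyset$. Since $x_i\in S$, the defining property of the support guarantees that every open neighborhood of $x_i$ has strictly positive $f_{\textup{in}}$-mass, which immediately yields \eqref{sets with positive mass}.

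I do not expect any real obstacle here; the argument is purely topological. The only mildly technical point is the characterization of Radon measures with singleton support as Dirac masses, which follows from standard properties of locally finite Borel measures. The genuine work of the paper begins afterwards, where this pair of separated sets of positive mass serves as the seed for the iterative positivity argument driving the proof of Theorem~\ref{theorem instantaneous gelation}.
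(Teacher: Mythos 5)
Your argument is correct, and it takes a genuinely different route from the paper. You reduce the claim to the standard fact that a nonzero Radon measure on $(0,\infty)$ whose support is a singleton $\{V\}$ must equal $M\delta_{V}$: since $(0,\infty)$ is second countable, the complement of the support is a null set (Lindel\"of), so the hypothesis $f_{\textup{in}}\neq M\delta_{V}$ forces $\textup{supp}(f_{\textup{in}})$ to contain two distinct points $x_{1}\neq x_{2}$, and every ball centred at a support point has positive mass by definition. The paper instead runs a self-contained dyadic bisection argument: assuming no two well-separated dyadic subintervals of $(0,L]$ both carry mass, it produces a nested sequence of intervals of length $2^{1-n}L$ each carrying the full mass of $(0,L]$, whose intersection is a single atom --- i.e.\ it reproves by hand exactly the ``singleton support $\Rightarrow$ Dirac'' dichotomy you invoke, and then patches the case where the restriction to $(0,L]$ is an atom but mass remains beyond $L$. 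Your version is shorter and cleaner; the paper's buys only self-containedness. Two cosmetic points: the proposition asks for $\eta_{0}\in(0,1)$, so you should take $\eta_{0}:=\min\bigl\{\tfrac{1}{2},\tfrac{1}{4}\min\{x_{1},x_{2},|x_{1}-x_{2}|\}\bigr\}$; and your exclusion of $f_{\textup{in}}\equiv 0$ is an implicit assumption shared with the paper (its proof likewise opens with ``Since $f_{\textup{in}}\neq 0$\dots''), so it is not a defect of your argument relative to the original.
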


\begin{proof}


Since $f_{\textup{in}}\neq 0,$ we have that there exists $L\in\mathbb{N}$ such that $  \int_{(0,L]}f_{\textup{in}}(v)\der v>0$. Let $n\in\mathbb{N}$. We split the interval $(0,L]$ into $2^{n}$ intervals of the form $\Big(\frac{Lj}{2^{n}},\frac{L(j+1)}{2^{n}}\Big]$, for $j\in\{0,\ldots,2^{n}-1\}$. We first prove the following claim.

\textit{Claim:} Assume that $f_{\textup{in}}$ restricted to the interval  $(0,L]$ is not a single Dirac measure. Then there exists a sufficiently large $M>1$ such that for any $n\geq M$, there exist $l_{1},l_{2}\in\mathbb{N}\cap[0,L]$, with $l_{2}\geq l_{1}+2$, such that  
\begin{align}\label{the claim is sufficient}
    \int_{\big(\frac{Ll_{1}}{2^{n}},\frac{L(l_{1}+1)}{2^{n}}\big]}f_{\textup{in}}(v)\der v>0 \textup{ and }
   \int_{\big(\frac{Ll_{2}}{2^{n}},\frac{L(l_{2}+1)}{2^{n}}\big]}f_{\textup{in}}(v)\der v>0.
\end{align}
\textit{Proof of claim:} Assume by contradiction that for any $n\in\mathbb{N}$ and any  $l_{1},l_{2}\in\mathbb{N}\cap[0,L]$, with $l_{2}\geq l_{1}+2$, it holds that either  $\int_{\big(\frac{Ll_{1}}{2^{n}},\frac{L(l_{1}+1)}{2^{n}}\big]}f_{\textup{in}}(v)\der v=0$ or 
   $\int_{\big(\frac{Ll_{2}}{2^{n}},\frac{L(l_{2}+1)}{2^{n}}\big]}f_{\textup{in}}(v)\der v=0.$ This implies in particular that for every $n\in\mathbb{N}$ there exists a point $l_{n}$, which depends on $n\in\mathbb{N}$, such that
\begin{align}\label{this will converge to a dirac for the contradiction}
      \int_{\big(\frac{Ll_{n}}{2^{n}},\frac{L(l_{n}+2)}{2^{n}}\big]}f_{\textup{in}}(v)\der v=  \int_{(0,L]}f_{\textup{in}}(v)\der v.
\end{align}
We iterate this reasoning. When $n=2$ it holds that there exists $j_{2}\in\{0,\ldots,2\}$ such that $ \int_{\big(\frac{Lj_{2}}{2^{2}},\frac{L(j_{2}+2)}{2^{2}}\big]}f_{\textup{in}}(v)\der v=  \int_{(0,L]}f_{\textup{in}}(v)\der v.$ We now split the interval $\big(\frac{Lj_{2}}{2^{2}},\frac{L(j_{2}+2)}{2^{2}}\big]$ into $4$ subintervals and apply the same argument. We obtain that there exists $j_{3}\in\{2j_{2},\ldots, 2(j_{2}+1)\}$ such that
\begin{align*}
     \int_{\big(\frac{Lj_{3}}{2^{3}},\frac{L(j_{3}+2)}{2^{3}}\big]}f_{\textup{in}}(v)\der v=    \int_{\big(\frac{Lj_{2}}{2^{2}},\frac{L(j_{2}+2)}{2^{2}}\big]}f_{\textup{in}}(v)\der v= \int_{(0,L]}f_{\textup{in}}(v)\der v.
\end{align*}
Inductively, we find that for every $n\in\mathbb{N}$ there exists $j_{n}\in\{2j_{n-1},\ldots, 2(j_{n-1}+1)\}$ such that 
\begin{align*}
      \int_{\big(\frac{Lj_{n}}{2^{n}},\frac{L(j_{n}+2)}{2^{n}}\big]}f_{\textup{in}}(v)\der v=  \int_{(0,L]}f_{\textup{in}}(v)\der v.
\end{align*}

If there exists $i\in\mathbb{N}$ such that $j_{n}=0$, for every $n\geq i$, then by the right-continuity of the cumulative distribution it holds that $  0=\lim_{n\rightarrow\infty}\int_{\big(0,\frac{2L}{2^{n}}\big]}f_{\textup{in}}(v)\der v=  \int_{(0,L]}f_{\textup{in}}(v)\der v$, which is a contradiction. As such, we have that $j_{n}\neq 0$, for sufficiently large $n$.

Thus, it further holds that
\begin{align*}
   \int_{\big[\frac{Lj_{n}}{2^{n}},\frac{L(j_{n}+2)}{2^{n}}\big]}f_{\textup{in}}(v)\der v= \int_{\big(\frac{Lj_{n}}{2^{n}},\frac{L(j_{n}+2)}{2^{n}}\big]}f_{\textup{in}}(v)\der v=  \int_{(0,L]}f_{\textup{in}}(v)\der v. 
\end{align*} 
For every $n\in\mathbb{N}$, $n\geq 2$, we denote by $\alpha_{n}:=\frac{Lj_{n}}{2^{n}}$ and by $\beta_{n}:=\frac{L(j_{n}+2)}{2^{n}}$. We thus have that $f_{\textup{in}}\big((0,\alpha_{n})\big)=0$ and that  $f_{\textup{in}}\big((\beta_{n},L]\big)=0$.  In addition, by the way we constructed $\alpha_{n},\beta_{n}$, we have that $\{\alpha_{n}\}_{n}$ is an increasing sequence, $\{\beta_{n}\}_{n}$ is a decreasing sequence, and that $|\alpha_{n}-\beta_{n}|\leq 2^{1-n}$.

Together with the fact that $j_{n}\neq 0$ for sufficiently large $n$, this implies that there exists a point $x_{0}\in(0,L]$ such that $\alpha_{n}\rightarrow x_{0}$ and $\beta_{n}\rightarrow x_{0}$. Since $(0,x_{0})=\bigcup_{n=1}^{\infty}(0,\alpha_{n})$, it holds that $f_{\textup{in}}((0,x_{0}))\leq \sum_{n=1}^{\infty}f_{\textup{in}}((0,\alpha_{n}))=0$. In the same manner, we have that $f_{\textup{in}}\big((x_{0},L]\big)=0.$ In other words, we obtain that $f_{\textup{in}}(\{x_{0}\})=  \int_{(0,L]}f_{\textup{in}}(v)\der v$, which contradicts the assumption that $f_{\textup{in}}$ is not a single Dirac measure on $(0,L]$ and this proves our claim.

We now use the claim in order to prove the statement of the proposition. If \eqref{the claim is sufficient} holds, then there is nothing to prove. Otherwise it holds that there exists $x_{0}\in(0,L]$ such that $f_{\textup{in}}(\{x_{0}\})=  \int_{(0,L]}f_{\textup{in}}(v)\der v$. Since by assumption $f_{\textup{in}}$ is not a single Dirac measure, we have that there exists $L_{1}>L$ such that  $  \int_{(L,L_{1}]}f_{\textup{in}}(v)\der v>0$. Using the same argument as before, we either have that the statement of the proposition is true or there exists $x_{1}\in(L,L_{1}]$ such that $f_{\textup{in}}(\{x_{1}\})=  \int_{(L,L_{1}]}f_{\textup{in}}(v)\der v$. Thus, we have that $f_{\textup{in}}$ restricted to the interval $(0,L_{1}]$ is a sum of two Dirac measures and the proposition holds.
\end{proof}
\begin{assumption}\label{assumption x2 x1}
   Let $x_{1},x_{2}$ be as in Proposition \ref{initial condition is not a dirac}.  Without loss of generality, we assume in all the following that $x_{2}>x_{1}$ for simplicity of notation.
\end{assumption}
We first prove that in order not to have solutions which are Dirac measures, it suffices to exclude the case of Dirac measures at initial time.
\begin{prop}[Positivity]\label{positivity}
  Assume $f$ is a mass-conserving solution  for (\ref{non-existence model}) as in Definition \ref{defi mass conserving solution} with $K$ as in (\ref{rain kernel general}). Assume $f_{\textup{in}}\neq M\delta_{V}$, for some $M,V\in(0,\infty)$. Then we have that, for every $t\in(0,\infty)$ and $R>0$, there exists a constant $C(t,R)>0$, depending on $t$ and $R$, such that
\begin{align}\label{constant t r all t all r}
    \int_{R}^{\infty}f(v,t)\der v\geq C(t,R).
\end{align}
\end{prop}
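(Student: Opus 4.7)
The plan is a two-step argument. First, I would preserve the positive masses on the sets $A=\textup{B}(x_1,\eta)$ and $B=\textup{B}(x_2,\eta)$ from Proposition \ref{initial condition is not a dirac} for all $t\in[0,T]$. Second, I would iterate a one-step coagulation argument, pairing $A$ with progressively larger clusters, to deposit positive mass above any prescribed $R>0$. The vanishing of $K$ on the diagonal creates no difficulty because $A$ and $B$ are separated, so $v/(v+v')$ is bounded away from $1/2$ on $A\times B$, and by \eqref{positivity of kernel} and \eqref{definition of g} the kernel admits a strict positive lower bound on every compact product set we use.

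For Step 1, by inner regularity of the Radon measure $f_{\textup{in}}$, pick a non-negative $\varphi\in\textup{C}_{\textup{c}}((0,\infty))$ with $\textup{supp}(\varphi)\subset A$ and $\int \varphi\, f_{\textup{in}}\,\der v>0$; pick $\psi_0$ analogously on $B$. Since $\varphi\geq 0$ we have $\Theta_\varphi(v,v')\geq -\varphi(v)-\varphi(v')$, so combining the pointwise bound $K(v,v')\leq H_1(v^\gamma+v'^\gamma)$ from \eqref{upper bound small h} with the moment bound \eqref{upper bound for solution} (which controls both $\int f(\cdot,t)\,\der v$ and $\int v^\gamma f(\cdot,t)\,\der v$) yields
\[
\frac{\der}{\der t}\int \varphi\, f(\cdot,t)\, \der v \;\geq\; -C(T,\varphi)\int \varphi\, f(\cdot,t)\, \der v.
\]
Gronwall then gives $\int_A f(v,t)\,\der v\geq \int \varphi\, f(\cdot,t)\,\der v\geq e^{-C(T,\varphi)T}\int \varphi\, f_{\textup{in}}\,\der v>0$, and likewise for $\int \psi_0 f(\cdot,t)\,\der v$.

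For Step 2, fix $R>0$ and $N\in\N$ with $N x_1+x_2>R$. I build non-negative bumps $\psi_1,\ldots,\psi_N\in\textup{C}_{\textup{c}}((0,\infty))$, where $\psi_j$ is supported in a small neighbourhood of $j x_1 + x_2$, chosen pairwise disjoint, disjoint from $A$, and contained in a fixed compact subset of $(0,\infty)$. For $v\in A$ and $v'\in\textup{supp}(\psi_{j-1})$ we have $v+v'\in\textup{supp}(\psi_j)$ and $\psi_j(v)=\psi_j(v')=0$, so $\Theta_{\psi_j}(v,v')=\psi_j(v+v')\geq 0$; on the compact set $A\times\textup{supp}(\psi_{j-1})$, \eqref{positivity of kernel} together with \eqref{definition of g} provides $K\geq c_j>0$. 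Therefore
\[
\frac{\der}{\der t}\int \psi_j\, f(\cdot,t)\,\der v \;\geq\; c_j\Big(\int \varphi\, f(\cdot,t)\,\der v\Big)\Big(\int \psi_{j-1}\, f(\cdot,t)\,\der v\Big) - C_j\int \psi_j\, f(\cdot,t)\,\der v.
\]
Multiplying by $e^{C_j t}$, integrating from $0$ to $t$, and using Step 1 together with the inductive hypothesis $\int \psi_{j-1}\, f(\cdot,s)\,\der v>0$ for $s>0$, we obtain $\int \psi_j\, f(\cdot,t)\,\der v>0$ for every $t>0$. After $N$ iterations the surviving bump lies above $R$, which yields \eqref{constant t r all t all r}.

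The main delicate point is the a priori bookkeeping: the supports and radii of the $\psi_j$ must be set up in advance so that at each stage $\textup{supp}(\psi_j)$ is disjoint from both $A$ and the previous bumps, so that $v/(v+v')$ stays uniformly away from $\tfrac12$ on each compact product $A\times\textup{supp}(\psi_{j-1})$ (ensuring $c_j>0$ via \eqref{definition of g}), and so that the whole construction remains inside a fixed compact subset of $(0,\infty)$ (ensuring the loss constants $C_j$ are finite). Once this is arranged, the induction closes and the quantitative bound $C(t,R)>0$ follows immediately from the explicit Duhamel representation.
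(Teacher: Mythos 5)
Your proposal is correct and follows essentially the same route as the paper: a Gronwall argument to preserve the masses on $A$ and $B$, followed by an induction that pairs the bump at $x_{1}$ with the bump at $(j-1)x_{1}+x_{2}$ to push mass into a neighbourhood of $jx_{1}+x_{2}$, using the separation of the supports to bound $g$ away from zero despite the vanishing on the diagonal. The ``delicate bookkeeping'' you flag is exactly what the paper resolves by taking the ball around $x_{1}$ of radius $\eta_{0}/2^{N}$ and the target balls of radii $\eta_{0}/2^{N-n}$, so that $\supp\varphi+\supp\psi_{j-1}$ always lands where $\psi_{j}=1$.
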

We postpone the proof of Proposition \ref{positivity} to after Proposition \ref{induction step}.  Proposition \ref{induction step} is a simplified version of Proposition  \ref{positivity}, which gives a clear picture on how the coagulation mechanism drives the size of the particles to increase, allowing us to prove positivity in a suitable sense.
\begin{prop}[Induction basis]\label{induction step}
  Assume $f$ is a mass-conserving solution  for (\ref{non-existence model}) as in Definition \ref{defi mass conserving solution} with $K$ as in (\ref{rain kernel general}). Assume $f_{\textup{in}}\neq M\delta_{V}$, for some $M,V\in(0,\infty)$. Let $A,B$, $x_{1},x_{2}$ and $\eta_{0}$ be as in Proposition \ref{initial condition is not a dirac}. Then we have that, for every  $t\in(0,\infty)$, there exists a constant $C(t,x_{1},x_{2},\eta_{0})>0$, depending on $t$, $x_{1},x_{2}$, and $\eta_{0}$, such that
\begin{align*}
    \int_{\textup{B}\big(x_{1}+x_{2},\frac{5\eta_{0}}{2}\big)}f(v,t)\der v \geq C(t,x_{1},x_{2},\eta_{0}).
\end{align*}
\end{prop}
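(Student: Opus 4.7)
The strategy is to apply the weak formulation \eqref{weak formulation equation solution} with a nonnegative bump function $\Phi$ localised around $x_1+x_2$, and to show that $h(t):=\int_{(0,\infty)}\Phi(v)f(v,t)\,\der v$ is strictly positive for every $t>0$. The driving mechanism is the gain produced by coagulation of particles in $A$ with particles in $B$: if $v\in A$ and $v'\in B$ then $v+v'\in \textup{B}(x_1+x_2,2\eta)$, and, crucially, $|v-v'|\geq\eta_0$, so the kernel admits a strictly positive lower bound on $A\times B$ even though it vanishes on $\{v=v'\}$. For the geometric setup, since $\textup{B}(x_1+x_2,5\eta_0'/2)\subset\textup{B}(x_1+x_2,5\eta_0/2)$ when $\eta_0'\leq\eta_0$ and the conclusions of Proposition \ref{initial condition is not a dirac} persist under shrinking $\eta_0$, we may assume without loss of generality that $\eta_0<\min(x_1,x_2)/5$ and fix $\eta=\eta_0$. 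Then any nonnegative $\Phi\in\textup{C}_{\textup{c}}((0,\infty))$ with $\Phi\equiv 1$ on $\textup{B}(x_1+x_2,2\eta)$ and $\supp\Phi\subset\textup{B}(x_1+x_2,5\eta_0/2)$ satisfies $\supp\Phi\cap(A\cup B)=\emptyset$. By inner regularity of Radon measures we also pick $\psi_A,\psi_B\in\textup{C}_{\textup{c}}((0,\infty))$ with $0\leq\psi_A\leq\chi_A$, $0\leq\psi_B\leq\chi_B$, $c_A:=\int\psi_A f_{\textup{in}}\,\der v>0$ and $c_B:=\int\psi_B f_{\textup{in}}\,\der v>0$.

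\textbf{Main estimates.}
From \eqref{upper bound small h}, \eqref{definition of g} and the moment bound \eqref{upper bound for solution}, the integral $\int_{(0,\infty)}K(v,v')f(v',s)\,\der v'$ stays bounded uniformly in $s\in[0,T]$ for $v$ in any fixed compact subset of $(0,\infty)$. Hence $u_A(t):=\int\psi_A f(v,t)\,\der v$ is Lipschitz on $[0,T]$ by \eqref{weak formulation equation solution}, and discarding the nonnegative gain term $\psi_A(v+v')$ in $\Theta_{\psi_A}$ gives $u_A'(t)\geq -Cu_A(t)$ a.e., so Gronwall yields $u_A(t)\geq c_A e^{-Ct}>0$; similarly $u_B(t)\geq c_B e^{-C't}$. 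The same reasoning makes $h$ Lipschitz with $h'(t)=\tfrac12\iint K(v,v')\Theta_\Phi(v,v') f(v,t)f(v',t)\,\der v\,\der v'$ a.e. Restricting the gain to $A\times B$ (and, by symmetry, its mirror image) we have $\Phi(v+v')=1$, $\Phi(v)=\Phi(v')=0$, and Assumption \ref{assinstgelation} yields
\[
K(v,v')\geq C_R\,G_0\Big(\tfrac{\eta_0}{2(x_1+x_2+2\eta_0)}\Big)^k=:K_{\min}>0\quad\text{on }A\times B,
\]
using \eqref{positivity of kernel} with $R$ chosen so that $[1/R,R]\supset A\cup B$ together with $|\tfrac{v}{v+v'}-\tfrac12|=\tfrac{|v-v'|}{2(v+v')}\geq\tfrac{\eta_0}{2(x_1+x_2+2\eta_0)}$ on $A\times B$. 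The gain is therefore at least $K_{\min}u_A(t)u_B(t)\geq K_{\min}c_Ac_B e^{-(C+C')t}$, while the loss $\iint K\Phi(v)f(v,t)f(v',t)\,\der v\,\der v'$ is at most $C''h(t)$ by the same bound on $\int K(v,\cdot)f(\cdot,t)$. Altogether,
\[
h'(t)\geq K_{\min}c_Ac_B e^{-(C+C')t}-C''h(t)\quad\text{a.e.,}\quad h(0)\geq 0,
\]
and comparison with the explicit solution of the linear ODE $y'=K_{\min}c_Ac_B e^{-(C+C')t}-C''y$, $y(0)=0$, which is strictly positive for every $t>0$, gives $h(t)\geq y(t)=:C(t,x_1,x_2,\eta_0)>0$. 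Since $\Phi\leq\chi_{\textup{B}(x_1+x_2,5\eta_0/2)}$, this is the claimed lower bound.

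\textbf{Main obstacle.}
The delicate point is obtaining the strictly positive lower bound $K_{\min}$ on $A\times B$ for a kernel that vanishes on the diagonal; this is precisely why Proposition \ref{initial condition is not a dirac} is stated for two disjoint sets at positive distance $\eta_0$ rather than for a single set of positive mass, and it also forces the geometric reduction $\eta_0<\min(x_1,x_2)/5$ that isolates $\supp\Phi$ from $A\cup B$. Once these ingredients are in place, the positivity of $h(t)$ for all $t>0$ is a standard Gronwall-type comparison driven by the $A\times B$ contribution to the coagulation gain.
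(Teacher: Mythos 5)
Your proposal is correct and follows essentially the same two-step strategy as the paper: first a Gronwall bound (dropping the gain) to propagate positive mass in $A$ and $B$ to all positive times, then a second Gronwall/ODE comparison for a bump function around $x_1+x_2$ whose gain term is bounded below on $A\times B$ using $|v-v'|\geq\eta_0$, \eqref{definition of g} and \eqref{positivity of kernel}. The only differences are presentational (test functions below rather than above the indicators of $A,B$, a.e.\ derivatives instead of integral inequalities, and the harmless reduction $\eta_0<\min(x_1,x_2)/5$, none of which affect the argument).
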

\begin{proof}
Let $A=\textup{B}(x_{1},\frac{\eta_{0}}{2})$, $B=\textup{B}(x_{2},\frac{\eta_{0}}{2})$, where $\eta_{0}$ is as in Proposition \ref{initial condition is not a dirac}.
  Let $\varphi\in\textup{C}_{0}(0,\infty)$ in (\ref{weak formulation equation solution}) be such that 
\begin{align}
    \varphi(v)&=1, \textup{ if } v\in A;\label{statement positivity 1}\\
    \varphi(v)&=0, \textup{ if } \textup{dist}(v,A)\geq \frac{\eta_{0}}{2}.\label{statement positivity 2}
\end{align}
Then, since $\varphi(v+v')\geq 0$ and by the symmetry of $K(v,v')$, we have that
\begin{align*}
    \int_{(0,\infty)}&\varphi(v)  f(v,t)\der v
    \\
    &\geq \int_{(0,\infty)}\varphi(v)f_{\textup{in}}(v)\der v-\int_{0}^{t}\int_{(0,\infty)}\int_{(0,\infty)}f(v,s)f(v',s)K(v,v')\varphi(v)\der v' \der v\der s.
\end{align*}
Notice that from (\ref{upper bound small h}) and (\ref{definition of g}), it follows that there exists a constant $C>0$ such that $K(v,v')\leq C (v^{\gamma}+v'^{\gamma})$. Thus
\begin{align*}
&\int_{(0,\infty)}\varphi(v)f(v,t)\der v\\
&\geq \int_{(0,\infty)}\varphi(v)f_{\textup{in}}(v)\der v-C\int_{0}^{t}\int_{(0,\infty)}\int_{(0,\infty)}f(v,s)f(v',s)(v^{\gamma}+v'^{\gamma})\varphi(v)\der v' \der v\der s\\
&\geq \int_{(0,\infty)}\varphi(v)f_{\textup{in}}(v)\der v-C\int_{0}^{t}\bigg(\int_{(0,\infty)} (1+v'^{\gamma})f(v',s)\der v' \int_{(0,\infty)}(1+v^{\gamma})f(v,s)\varphi(v) \der v\bigg) \der s.
    \end{align*}
    From (\ref{upper bound for solution}), there exists $C(t)>0$, which is non-decreasing in $t$, such that $\sup_{s\in[0,t]}\int_{(0,\infty)}(1+v^{\gamma})f(v,t)\der v\leq C(t)$. Moreover, using the fact that $\varphi$ is supported in $\textup{B}(x_{1},\eta_{0})$, we can deduce that there exists a constant $C(t,x_{1},\eta_{0})>0$ such that
    \begin{align}\label{integral form ode}
\int_{(0,\infty)}\varphi(v)f(v,t)\der v&\geq \int_{(0,\infty)}\varphi(v)f_{\textup{in}}(v)\der v-C(t,x_{1},\eta_{0})\int_{0}^{t}\int_{(0,\infty)}f(v,s)\varphi(v) \der v\der s.
    \end{align}
    We notice that \eqref{integral form ode}  is the integral form of a differential inequality whose integration yields that there exists a constant $C(t,x_{1},\eta_{0})>0$ such that 
        \begin{align*}
\int_{(0,\infty)}\varphi(v)f(v,t)\der v&\geq C(t,x_{1},\eta_{0})\int_{(0,\infty)}\varphi(v)f_{\textup{in}}(v)\der v\\
&\geq C(t,x_{1},\eta_{0})\int_{A}f_{\textup{in}}(v)\der v\geq C(t,x_{1},\eta_{0}),
    \end{align*}
where we used (\ref{sets with positive mass}) for the last inequality. It follows that
         \begin{align}\label{lower bound for A}
\int_{\textup{B}(x_{1},\eta_{0})}f(v,t)\der v\geq \int_{(0,\infty)}\varphi(v)f(v,t)\der v\geq C(t,x_{1},\eta_{0}).
    \end{align}  
    Using the same argument for the set $B$, we obtain that there exists a constant $C(t,x_{2},\eta_{0})>0$ such that
     \begin{align}\label{lower bound for B}
\int_{\textup{B}(x_{2},\eta_{0})}f(v,t)\der v\geq C(t,x_{2},\eta_{0}).
    \end{align} 
    Taking now
\begin{align*}
    \varphi(v)&=1, \textup{ if } v\in \textup{B}(x_{1}+x_{2},2\eta_{0});\\
    \varphi(v)&=0, \textup{ if } \textup{dist}(v,\textup{B}(x_{1}+x_{2},2\eta_{0}))\geq \frac{\eta_{0}}{2},
\end{align*}
we obtain, as before, that
\begin{align*}
    \int_{(0,\infty)}\varphi(v)f(v,t)\der v&\geq \int_{(0,\infty)}\varphi(v)f_{\textup{in}}(v)\der v\\
&-\int_{0}^{t}\int_{(0,\infty)}\int_{(0,\infty)}f(v,s)f(v',s)K(v,v')\varphi(v)\der v' \der v\der s\\
&+\frac{1}{2}\int_{0}^{t}\int_{(0,\infty)}\int_{(0,\infty)}\varphi(v+v')f(v,s)f(v',s)K(v,v')\der v' \der v\der s\\
    &\geq-C\int_{0}^{t}\int_{(0,\infty)}(1+v'^{\gamma})f(v',s)\der v'\int_{(0,\infty)}(1+v^{\gamma})f(v,s)\varphi(v) \der v\der s\\
&+\frac{1}{2}\int_{0}^{t}\int_{\textup{B}(x_{1},\eta_{0})}\int_{\textup{B}(x_{2},\eta_{0})}\varphi(v+v')f(v,s)f(v',s)K(v,v')\der v' \der v\der s.
\end{align*}
 By Proposition \ref{initial condition is not a dirac}, if $v\in \textup{B}(x_{1},\eta_{0})$ and $v'\in \textup{B}(x_{2},\eta_{0})$, we have that $\textup{dist}(v,v')\geq \eta_{0}$ and that $v+v'\leq x_{1}+x_{2}+2\eta_{0}$. In other words, from (\ref{definition of g}), it holds that there exists a constant $C(x_{1},x_{2},\eta_{0})$, depending on $x_{1},x_{2}$ and $\eta_{0}$, such that
\begin{align}\label{lower bound on g}
    g\Big(\frac{v}{v+v'}\Big)\geq \frac{ G_{0}|v-v'|^{k}}{2^{k}(v+v')^{k}}\geq  C(x_{1},x_{2},\eta_{0}),
\end{align}
where $g$ is as in (\ref{rain kernel general}). From  (\ref{rain kernel general}), (\ref{upper bound for solution}), (\ref{lower bound on g}) and since $\textup{supp}(\varphi)\subseteq \textup{B}(x_{1}+x_{2}, \frac{5\eta_{0}}{2})$, we further deduce that
\begin{align*}
    \int_{(0,\infty)}\varphi(v)f(v,t)\der v&\geq-C(t,x_{1},x_{2},\eta_{0})  \int_{0}^{t}\int_{(0,\infty)}\varphi(v)f(v,s)\der v\der s\\
&+C(x_{1},x_{2},\eta_{0})\int_{0}^{t}\int_{\textup{B}(x_{1},\eta_{0})}\int_{\textup{B}(x_{2},\eta_{0})}\varphi(v+v')f(v,s)f(v',s)h(v,v')\der v' \der v\der s.
\end{align*}
Denote by $C_{H,x_{1},x_{2},\eta_{0}}=\min\{C_{R}, C_{H}\}$, where $C_{R}$ is the constant associated to $R=\max\{x_{2}+\eta_{0}, \frac{1}{x_{1}-\eta_{0}}\}$ in (\ref{positivity of kernel}), and $C_{H}:=2^{1-\gamma}H^{\gamma}H_{0}$, where $H$ and $H_{0}$ are as in (\ref{constant H}). By Assumption \ref{assumption x2 x1}, we have that $h(v,v')\geq C_{R}$ when $v+v'\leq H$ and since $v^{\gamma}+v'^{\gamma}\geq 2^{1-\gamma}(v^{\gamma}+v'^{\gamma})$, we have that  $h(v,v')\geq C_{H}$ when $v+v'> H$. Then
\begin{align}\label{later use}
  \int_{(0,\infty)}\varphi(v)f(v,t)\der v   &\geq-C(t,x_{1},x_{2},\eta_{0})  \int_{0}^{t}\int_{(0,\infty)}\varphi(v)f(v,s)\der v\der s\nonumber\\
&+C(x_{1},x_{2},\eta_{0})C_{H,x_{1},x_{2},\eta_{0}}\int_{0}^{t}\int_{\textup{B}(x_{1},\eta_{0})}\int_{\textup{B}(x_{2},\eta_{0})}f(v,s)f(v',s)\der v' \der v\der s.
\end{align}
Using (\ref{lower bound for A}) and (\ref{lower bound for B}), we have that there exist $C_{1}(t,x_{1},x_{2},\eta_{0}) , C_{2}(t,x_{1},x_{2},\eta_{0}) >0$ such that
\begin{align*}
    \int_{(0,\infty)}\varphi(v)f(v,t)\der v\geq-C_{1}(t,x_{1},x_{2},\eta_{0})  \int_{0}^{t}\int_{(0,\infty)}\varphi(v)f(v,s)\der v\der s+C_{2}(t,x_{1},x_{2},\eta_{0}),
\end{align*}
so we conclude as before that there exist $C(t,x_{1},x_{2},\eta_{0}) >0$ such that
\begin{align*}
    \int_{\textup{B}\big(x_{1}+x_{2},\frac{5\eta_{0}}{2}\big)}f(v,t)\der v \geq C(t,x_{1},x_{2},\eta_{0}).
\end{align*}
This concludes our proof.
\end{proof}
\begin{rmk}\label{remark needed for induction}
Let $n\in\mathbb{N}$. By modifying the constants in \eqref{statement positivity 1}, \eqref{statement positivity 2}, we can obtain in \eqref{lower bound for A} that
    \begin{align}
    \int_{\textup{B}(x_{1},\frac{\eta_{0}}{2^{n}})}f(v,t)\der v\geq C(t,x_{1},\eta_{0},n) \textup{ and }  \int_{\textup{B}(x_{1}+x_{2},\frac{\eta_{0}}{2^{n}})}f(v,t)\der v\geq C(t,x_{1},x_{2},\eta_{0},n),
\end{align}
for some constants $C(t,x_{1},\eta_{0},n), C(t,x_{1},x_{2},\eta_{0},n)>0$.
\end{rmk}
We are now able to generalize the argument used in Proposition \ref{induction step} in order to prove Proposition \ref{positivity}.
\begin{proof}[Proof of Proposition \ref{positivity}]
We now wish to prove that (\ref{constant t r all t all r}) holds. Let $R>0$. Let $x_{1}$ and $x_{2}$ as in Proposition \ref{initial condition is not a dirac}. Then there exists $N\in\mathbb{N}$ such that $Nx_{1}+x_{2}>R+\eta_{0}$. We will prove that there exists a constant $C(t,x_{1},x_{2},\eta_{0},N)>0$ such that
\begin{align*}
\int_{\textup{B}(Nx_{1}+x_{2},\eta_{0})}f(v,t)\der v \geq C(t,x_{1},x_{2},\eta_{0},N)
    \end{align*} 
using an iterative argument.  If $N=0$ or $N=1$, the statement follows from Remark \ref{remark needed for induction}. Let $N\geq 2$. We will prove that for every $n\in\{1,\ldots,N\}$ it holds that there exists a constant $C(t,x_{1},x_{2},\eta_{0},N)>0$ such that
\begin{align}\label{overline c}
\int_{\textup{B}\big (n x_{1}+x_{2},\frac{\eta_{0}}{2^{N-n}}\big )}f(v,t)\der v\geq C(t,x_{1},x_{2},\eta_{0},N).
    \end{align} 

Moreover, by Remark \ref{remark needed for induction}, we have that
\begin{align}\label{k plus one}
    \int_{\textup{B}\big(x_{1},\frac{\eta_{0}}{2^{N+1}}\big)}f(v,t)\der v\geq C(t,x_{1},\eta_{0},N).
\end{align}
Notice that the case $n=1$ needed for the iterative argument follows from Remark \ref{remark needed for induction} and \eqref{k plus one}.

 We will only prove that if \eqref{overline c} holds for $n=N-1$, then \eqref{overline c} holds for $n=N$.  As before, let
\begin{align*}
    \varphi(v)&=1, \textup{ if } v\in \textup{B}\bigg(N x_{1}+x_{2},\Big(1-\frac{1}{2^{N}}\Big)\eta_{0}\bigg);\\
    \varphi(v)&=0, \textup{ if } \textup{dist}\bigg(v,\textup{B}\bigg(N x_{1}+x_{2},\Big(1-\frac{1}{2^{N}}\Big)\eta_{0}\bigg)\bigg)\geq \frac{\eta_{0}}{2^{{N}}}.
\end{align*}
By Assumption \ref{assumption x2 x1}, we have $x_{2}>x_{1}$. If $v\in \textup{B}((N-1)x_{1}+x_{2},\frac{\eta_{0}}{2})$ and $v'\in\textup{B}(x_{1},\frac{\eta_{0}}{2^{N}})$, then $\textup{dist}(v,v')\geq \eta_{0}>0$ since we have that $|(N-2)x_{1}+x_{2}-\frac{\eta_{0}}{2^{N}}-\frac{\eta_{0}}{2}|\geq |(N-2)x_{1}+x_{2}-\eta_{0}|\geq\eta_{0}$ by Proposition \ref{initial condition is not a dirac}. Thus, using similar arguments as in (\ref{lower bound on g}) and in the derivation of (\ref{later use}), we obtain that
\begin{align*}
    \int_{(0,\infty)}\varphi(v)f(v,t)\der v&\geq- C(t,x_{1},x_{2},\eta_{0},N) \int_{0}^{t}\int_{(0,\infty)}\varphi(v)f(v,s)\der v\der s\\
&+C(x_{1},x_{2},\eta_{0},N)\int_{0}^{t}\int_{\textup{B}(x_{1},\frac{\eta_{0}}{2^{N}})}\int_{\textup{B}((N-1)x_{1}+x_{2},\frac{\eta_{0}}{2})}h(v,v')f(v,s) f(v',s)\der v\der v' \der s\\
&\geq-C(t,x_{1},x_{2},\eta_{0},N) \int_{0}^{t}\int_{(0,\infty)}\varphi(v)f(v,s)\der v\der s\\
&+C(x_{1},x_{2},\eta_{0},N)\int_{0}^{t}\int_{\textup{B}(x_{1},\frac{\eta_{0}}{2^{N}})}f(v,s)\der v \int_{\textup{B}((N-1)x_{1}+x_{2},\frac{\eta_{0}}{2})}f(v',s)\der v' \der s\\
&\geq-C_{1}(t,x_{1},x_{2},\eta_{0},N) \int_{0}^{t}\int_{(0,\infty)}\varphi(v)f(v,s)\der v\der s+C_{2}(t,x_{1},x_{2},\eta_{0},N),
\end{align*}
for some $C_{1}(t,x_{1},x_{2},\eta_{0},N), C_{2}(t,x_{1},x_{2},\eta_{0},N)>0$, where in the last inequality we used (\ref{overline c}). Thus, it holds that there exists a constant $C(t,x_{1},x_{2},\eta_{0},N)>0$ such that
    \begin{align*}
\int_{R}^{\infty}f(v,t)\der v\geq \int_{\textup{B}(Nx_{1}+x_{2},\eta_{0})}f(v,t)\der v\geq \int_{(0,\infty)}\varphi(v)f(v,t)\der v\geq C(t,x_{1},x_{2},\eta_{0},N),
    \end{align*} 
    which concludes our proof.
\end{proof}
We can now directly conclude that the moments of order $m$ of our solutions are large for large values of $m$.
\begin{cor}\label{corollary infinity}
  Assume $f$ is a mass-conserving solution  for (\ref{non-existence model}) as in Definition \ref{defi mass conserving solution} with $K$ as in (\ref{rain kernel general}).  Assume $f_{\textup{in}}\neq M\delta_{V}$, for some $M,V\in(0,\infty)$. Then for each $t>0$ it holds that
\begin{align*} \liminf_{R\rightarrow\infty}\liminf_{m\rightarrow \infty} M_{R,m}(f(t))^{\frac{1}{m}}  :=\liminf_{R\rightarrow\infty}\liminf_{m\rightarrow \infty} \bigg(\int_{[R,\infty)}(v-R)^{m}f(v,t)\der v\bigg)^{\frac{1}{m}}=\infty.
\end{align*}
\begin{proof}
By Proposition \ref{positivity}, we have that there exists $C(t,2R)>0$ such that
\begin{align*}
M_{R,m}(f(t))^{\frac{1}{m}}  :=\bigg(\int_{[R,\infty)}(v-R)^{m}f(v,t)\der v\bigg)^{\frac{1}{m}}&\geq R \bigg(\int_{[2R,\infty)}f(v,t)\der v\bigg)^{\frac{1}{m}}\\
&\geq R C(t,2R)^{\frac{1}{m}}.
\end{align*}
As $C(t,2R)>0$, we have that $C(t,2R)^{\frac{1}{m}}\rightarrow 1$ as $m\rightarrow \infty$ and thus 
\begin{align*}
 \liminf_{m\rightarrow \infty} M_{R,m}(f(t))^{\frac{1}{m}}  \geq R.
\end{align*}
The conclusion follows by letting $R\rightarrow\infty$.
\end{proof}
\end{cor}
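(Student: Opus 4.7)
The plan is to derive the corollary as an almost immediate consequence of the positivity statement in Proposition \ref{positivity}, which guarantees that arbitrarily far tails of $f(\cdot,t)$ carry positive mass. The key observation is that on the interval $[2R,\infty)$ we have the pointwise lower bound $(v-R)^m \geq R^m$, which converts the (small, but strictly positive) tail mass into an exponentially growing lower bound for $M_{R,m}(f(t))$ as $m$ increases.

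First I would restrict the integration domain in $M_{R,m}(f(t))$ from $[R,\infty)$ to $[2R,\infty)$, losing nothing since the integrand is non-negative, and then use $(v-R)^m \geq R^m$ on that domain. This gives
\begin{equation*}
M_{R,m}(f(t)) \;=\; \int_{[R,\infty)} (v-R)^m f(v,t)\,\der v \;\geq\; R^m \int_{[2R,\infty)} f(v,t)\,\der v.
\end{equation*}
Invoking Proposition \ref{positivity} with the threshold $2R$ in place of $R$ furnishes a constant $C(t,2R) > 0$ such that $\int_{[2R,\infty)} f(v,t)\,\der v \geq C(t,2R)$. Taking $m$-th roots yields
\begin{equation*}
M_{R,m}(f(t))^{1/m} \;\geq\; R\, C(t,2R)^{1/m}.
\end{equation*}

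Now I would let $m\to\infty$ with $R$ fixed. Since $C(t,2R)$ is a strictly positive constant, $C(t,2R)^{1/m} \to 1$, so
\begin{equation*}
\liminf_{m\to\infty} M_{R,m}(f(t))^{1/m} \;\geq\; R.
\end{equation*}
Finally letting $R\to\infty$ concludes the argument. There is really no obstacle here: the content of the corollary is just that the quantitative positivity of tail mass from Proposition \ref{positivity} is strong enough to survive the $m$-th root asymptotics, and the mild estimate $(v-R)^m \geq R^m$ on $[2R,\infty)$ suffices because $C(t,2R)^{1/m} \to 1$ for any fixed $R$.
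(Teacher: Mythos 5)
Your argument is correct and coincides with the paper's own proof: both restrict the integral to $[2R,\infty)$, use the bound $(v-R)^{m}\geq R^{m}$ there, invoke Proposition \ref{positivity} at threshold $2R$, and conclude via $C(t,2R)^{1/m}\to 1$ followed by $R\to\infty$. No differences worth noting.
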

The theory on gelation for coagulation equations focuses on proving that solutions for coagulation equations with kernel of homogeneity $\gamma>1$ lose mass in finite time. It is thus useful to define the time when gelation occurs.
\begin{defi} We define
    \begin{align*}
        T_{\textup{gel}}:=\sup\Big\{t\geq 0: M_{1}(f(t))=M_{1}(f_{\textup{in}})\Big\}\in[0,\infty].
    \end{align*}
\end{defi}

We now follow the proof in \cite[Chapter 9]{bookcoagulation} (or in \cite{Dongen, Carr1992} for the discrete coagulation equation) in order to prove that instantaneous gelation (i.e., $T_{\textup{gel}}=0$) occurs for kernels of the form (\ref{rain kernel general}). The reason why we can adapt this proof to our case is that the main ingredient of the proof is the interaction between particles of different sizes, which allows us to include kernels which vanish on the diagonal.
\begin{prop}\label{all large moments finite}
For all $m\geq 2$ and $T\in(0,T_{\textup{gel}})$, it holds that
\begin{align}\label{momentestimate}
    \sup_{t\in[0,T]} M_{m}(f(t))<\infty.
\end{align}
\end{prop}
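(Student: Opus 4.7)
The plan is to derive a closed differential inequality for the $m$-th moment of $f$ and integrate it on $[0,T]$. Since $T<T_{\textup{gel}}$, by definition of $T_{\textup{gel}}$ the mass is conserved: $M_1(f(t)) = M_1(f_{\textup{in}}) =: M_1^*$ for every $t\in[0,T]$; this is the key quantitative input unlocked by the hypothesis $T<T_{\textup{gel}}$. All other moments will be controlled by bootstrapping from this.

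Starting from the weak formulation \eqref{weak formulation equation solution}, I would test with a compactly supported regularization $\varphi_R \in \textup{C}_{\textup{c}}((0,\infty))$ of $v\mapsto v^m$, namely $\varphi_R(v)=v^m$ on $[\epsilon,R]$ extended smoothly to zero outside $[\epsilon/2, 2R]$, and pass $\epsilon\to 0$, $R\to\infty$ at the end. The elementary inequality $(v+v')^m - v^m - (v')^m \leq C(m)\bigl(v(v')^{m-1} + v^{m-1}v'\bigr)$ valid for $m\geq 1$ and $v,v'\geq 0$, combined with the bound $K(v,v')\leq C(v^\gamma + (v')^\gamma)$ provided by \eqref{upper bound small h} and \eqref{definition of g}, yields after symmetrization
\begin{align*}
\frac{\der}{\der t}\int_{(0,\infty)}\varphi_R(v)f(v,t)\der v \leq C(m)\bigl[M_1^*\,M_{m+\gamma-1}^R(t) + M_{m-1}^R(t)\,M_{\gamma+1}^R(t)\bigr],
\end{align*}
where $M_s^R(t):=\int_{(0,R]}v^sf(v,t)\der v$. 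The negative contributions from $\{v>R\}\cup\{v'>R\}$, where $\Theta_{\varphi_R}$ becomes non-positive, are discarded.

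The \textbf{main obstacle} is the higher moment $M_{m+\gamma-1}^R$: because $\gamma>1$ its order strictly exceeds $m$, and the estimate does not close by direct Gr\"onwall. I would tame it via the crude truncation bound $v^{m+\gamma-1}\leq R^{\gamma-1}v^m$ on $\{v\leq R\}$, which gives $M_{m+\gamma-1}^R \leq R^{\gamma-1}M_m^R$, and use H\"older's interpolation between $M_1^*$ (bounded) and $M_m^R$ to control $M_{m-1}^R$ and $M_{\gamma+1}^R$ by powers of $M_m^R$ with exponents strictly smaller than $1$ when $m\geq \gamma+1$. A Bihari-type comparison then integrates the resulting inequality up to time $T$, producing an $R$-dependent bound $M_m^R(t)\leq \Psi(T,m,R)$ on $[0,T]$. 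Combining this with a bootstrap across moments (anchored at $m=\gamma$, where \eqref{upper bound for solution} already provides the required control, and using that each finite moment $M_{m_0}$ feeds into the estimate for the next slice of moments up to $M_{m_0+\gamma-1}$) and monotone convergence as $R\to\infty$ delivers the uniform bound $\sup_{t\in[0,T]}M_m(f(t))<\infty$ claimed in \eqref{momentestimate} for every $m\geq 2$.
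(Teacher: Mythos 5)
Your strategy does not close, and the obstacle you yourself flag is fatal rather than technical. The differential inequality you derive for $M_m$ involves $M_{m+\gamma-1}$ on the right-hand side, a \emph{strictly higher} moment since $\gamma>1$; interpolation can only bound $M_{m+\gamma-1}$ from above by moments of order $\geq m+\gamma-1$, so the regress goes upward forever and your proposed bootstrap runs in the wrong direction. The truncation $M^R_{m+\gamma-1}\leq R^{\gamma-1}M^R_m$ produces a Gr\"onwall constant of size $R^{\gamma-1}$, hence a bound $\Psi(T,m,R)\sim M^R_m(0)\,e^{CR^{\gamma-1}T}$ that blows up as $R\to\infty$; monotone convergence is useless without uniformity in $R$. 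More fundamentally, your argument uses only the \emph{upper} bound $K\leq C(v^\gamma+v'^\gamma)$ and is a propagation-of-moments scheme starting from $M_m(f_{\textup{in}})$ --- but the proposition does not assume $M_m(f_{\textup{in}})<\infty$ for $m>\gamma$; the supremum over $[0,T]$ includes $t=0$, so finiteness of the initial moments is part of what must be proven. No Gr\"onwall argument anchored at $t=0$ can do that.

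The missing idea is a tail-depletion argument that exploits the \emph{lower} bound on the kernel together with mass conservation up to time $T<T_{\textup{gel}}$. The paper sets $I_R(t)=\int_{(0,\infty)}[1-\chi_R(v)]vf(v,t)\der v$ and tests with $\varphi=v\chi_R$; in the region $(v,v')\in(0,\tfrac R2)\times(R,\infty)$ one has $\Theta_\varphi\leq v[\chi_R(v')-1]\leq 0$ and $K(v,v')\geq Cv'^{\gamma}$ (the factor $g$ is bounded below there because $v\leq v'/2$ keeps one far from the diagonal). Since the small particles carry at least $v_0/2$ of the mass uniformly on $[0,T]$ (Dini), this yields $I_R(s)-I_R(t)\leq -CR^{\gamma-1}v_0\int_s^tI_R(\tau)\der\tau$: the tail mass grows at rate $\geq CR^{\gamma-1}$ forward in time, and since it cannot exceed $v_0$ at time $T$, integrating backwards gives $I_R(t)\leq v_0e^{-Cv_0R^{\gamma-1}(T-t)}$. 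This superexponential decay of the tail in $R$ is then summed against $(j+1)^{m-1}$ over dyadic-type shells to give \eqref{momentestimate}. In short: the finiteness of high moments before $T_{\textup{gel}}$ is \emph{caused} by the rapid destruction of large clusters, a mechanism your upper-bound-only estimate cannot see.
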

\begin{proof}
Define $\chi_{R}\in\textup{C}(0,\infty)$
\begin{align}
   \chi_{R}(v)=
\begin{cases}
1 ,& \textup{ if } v\leq R;  \\
0 ,&\textup{ if } v> R+1
\end{cases}
\end{align}
and denote 
\begin{align*}
    J_{R}(t):=\int_{(0,\infty)}v\chi_{R}(v)f(v,t)\der v
\qquad \mbox{ and  } \qquad 
I_{R}(t):=\int_{(0,\infty)}[1-\chi_{R}(v)]vf(v,t)\der v.
\end{align*}
Notice that 
\begin{align*}
    I_{R}(t)+J_{R}(t)=\int_{(0,\infty)}vf(v,t)\der v=\int_{(0,\infty)}vf_{\textup{in}}(v)\der v=:v_{0},
\end{align*}
for every $R>0$ and every $t\leq T_{\textup{gel}}$. Moreover, notice that
\begin{align*}
    I_{R}(s)-I_{R}(t)=J_{R}(t)-J_{R}(s),
\end{align*}
for every $0\leq s<t\leq T_{\textup{gel}}$.

Let $T<T_{\textup{gel}}$. By Dini's monotone convergence theorem, there exists $R_{0}>1$ such that
\begin{align}\label{lower bound jr}
 J_{R}(t)\geq\frac{v_{0}}{2},
\end{align}
for every $R\geq R_{0}$ and every $t\in[0,T].$

Let now $\varphi(v):=v\chi_{R}(v)$ and $0\leq s<t\leq T<T_{\textup{gel}}$. Since $\varphi\in\textup{C}_{0}(0,\infty)$, we can test with $\varphi$ in (\ref{weak formulation equation solution}) and obtain information about $I_{R}$ using the equation for $J_{R}$. More precisely, we have that
\begin{align*}
    I_{R}(s)-I_{R}(t)=J_{R}(t)-J_{R}(s)&=\int_{(0,\infty)}v\chi_{R}(v)f(v,t)\der v- \int_{(0,\infty)}v\chi_{R}(v)f(v,s)\der v\\
 &\leq \int_{s}^{t}\int_{0}^{\frac{R}{2}}\int_{R}^{\infty}\Theta_{\varphi}(v,v')K(v,v')f(v,\tau)f(v',\tau)\der v'\der v\der \tau,
\end{align*}
where in the last inequality we have used that $\varphi(v+v')-\varphi(v')-\varphi(v)\leq 0$, for all $v,v'\in(0,\infty)$. Furthermore, since we are in the region $(v,v')\in(0,\frac{R}{2})\times(R,\infty)$ and since $\chi_{R}$ is a non-increasing function, we have that
\begin{align*}
    \Theta_{\varphi}(v,v')&=(v+v')\chi_{R}(v+v')-v'\chi_{R}(v')-v\chi_{R}(v)\\
    &=v\chi_{R}(v+v')+v'[\chi_{R}(v+v')-\chi_{R}(v')]-v
    \leq v[\chi_{R}(v+v')-1]\leq  v[\chi_{R}(v')-1],
\end{align*}
which implies that
\begin{align}
  I_{R}(t)-I_{R}(s)\leq -\int_{s}^{t}\int_{0}^{\frac{R}{2}}\int_{R}^{\infty}v[1-\chi_{R}(v')] K(v,v')f(v,\tau)f(v',\tau)\der v'\der v\der \tau.\label{upper bound in ode}
\end{align}
We now analyze $K(v,v')$ in the region $(v,v')\in(0,\frac{R}{2})\times(R,\infty)$. Using the definition of $g$ in (\ref{definition of g}) and the fact that $v\leq \frac{v'}{2}$, we notice that 
\begin{align*}
   g\Big(\frac{v}{v+v'}\Big)\geq \frac{G_{0}|v'-v|^{k}}{2^{k}(v+v')^{k}}\geq \frac{G_{0}v'^{k}}{4^{k}(v+v')^{k}}\geq \frac{G_{0}}{6^{k}}.
\end{align*}
Then, fixing $R\geq H$, where $H$ is as in (\ref{constant H}), we have that $v+v'\geq H$ and thus there exists a constant $C>0$, independent of $R$, such that 
\begin{align}\label{lower bound kernel finite moment estimates}
K(v,v')\geq Ch(v,v')\geq C(v^{\gamma}+v'^{\gamma})\geq  Cv'^{\gamma}
\end{align}
in the region $(v,v')\in(0,\frac{R}{2})\times(R,\infty)$. We plug (\ref{lower bound kernel finite moment estimates}) into (\ref{upper bound in ode}) and, since $\gamma>1$, we obtain that
\begin{align*}
        I_{R}(s)-I_{R}(t)&\leq -C\int_{s}^{t}\int_{0}^{\frac{R}{2}}\int_{R}^{\infty}v[1-\chi_{R}(v')] v'^{\gamma}f(v,\tau)f(v',\tau)\der v'\der v\der \tau\\
        &\leq -C\int_{s}^{t}\int_{0}^{\frac{R}{2}}vf(v,\tau)\der v\int_{R}^{\infty}v'^{\gamma}[1-\chi_{R}(v')] f(v',\tau)\der v'\der \tau\\
             &\leq -CR^{\gamma-1}\int_{s}^{t}\int_{0}^{\frac{R}{2}}vf(v,\tau)\der v\int_{R}^{\infty}v'[1-\chi_{R}(v')] f(v',\tau)\der v'\der \tau\\
               &=-CR^{\gamma-1}\int_{s}^{t}I_{R}(\tau)\int_{0}^{\frac{R}{2}}vf(v,\tau)\der v \der \tau,
 \end{align*}
 where in the last line we used the definition of $I_{R}$. However, for $R>2(R_{0}+1)$, where $R_{0}$ was chosen in order for (\ref{lower bound jr}) to hold, we have that
  \begin{align*}
        \int_{0}^{\frac{R}{2}}vf(v,t)\der v\geq J_{\frac{R}{2}-1}(t)\geq\frac{v_{0}}{2},
 \end{align*}
for all $t\in[0,T]$, and thus
 \begin{align}\label{inequality ode}
        I_{R}(s)-I_{R}(t)\leq -CR^{\gamma-1}v_{0}\int_{s}^{t} I_{R}(\tau)\der \tau.
 \end{align}
 Using (\ref{inequality ode}), we can use the same argument as in \cite[Proof of Lemma 9.2.2]{bookcoagulation} to deduce that
 \begin{align}\label{exponential decay}
I_{R}(t)\leq v_{0} \textup{e}^{-Cv_{0}R^{\gamma-1}(T-t)}.
 \end{align}
The argument used in \cite[Proof of Lemma 9.2.2]{bookcoagulation} consists in finding a suitable ODE, from which we can derive an exponential bound, together with the fact that $I_{R}(T)\leq v_{0}$.

Thus, let $m\geq 2$ and $0\leq t<T<T_{\textup{gel}}$. Let $L>\max\{2(R_{0}+1),H\}$, with $R_{0}$ as in (\ref{lower bound jr}) and $H$ as in (\ref{constant H}). Let $l=L+d$, for some $d\in\mathbb{N}.$ We have
\begin{align*}
   \int_{L}^{l}v^{m}f(v,t)\der v&\leq\sum_{j=L}^{l-1}(j+1)^{m-1} \int_{j}^{j+1}vf(v,t)\der v\\
   &\leq\sum_{j=L}^{l-1}(j+1)^{m-1}[I_{j-1}(t)-I_{j+1}(t)]\\
      &=\sum_{j=L-1}^{l-2}(j+2)^{m-1}I_{j}(t)-\sum_{j=L+1}^{l}j^{m-1}I_{j}(t)\\
        &\leq (L+1)^{m-1}I_{L-1}(t)+(L+2)^{m-1}I_{L}(t)+\sum_{j=L+1}^{l-2}[(j+2)^{m-1}-j^{m-1}]I_{j}(t)\\
          &\leq[(L+1)^{m-1}+(L+2)^{m-1}]v_{0}+2(m-1)\sum_{j=L+1}^{l-2}(j+2)^{m-2}I_{j}(t).
\end{align*}
Using now (\ref{exponential decay}), we obtain
\begin{align*}
  \int_{L}^{l}v^{m}f(v,t)\der v\leq[(L+1)^{m-1}+(L+2)^{m-1}]v_{0}+2(m-1)v_{0}\sum_{j=L+1}^{l-2}\textup{e}^{(m-2)\ln{(j+2)}-Cj^{\gamma-1}(T-t)}.
\end{align*}
If we have that $t<T<T_{\textup{gel}}$, then we can find a constant $C(t,T)>0$ such that $m\ln{(j+2)}-C(T-t)j^{\gamma-1}\leq 0$, for every $j\geq C(t,T)$. This will give us an upper bound for $\int_{C(t,T)}^{\infty}v^{m}f(v,t)\der v$ and we can control $\int_{0}^{C(t,T)}v^{m}f(v,t)\der v$ from above using the fact that the first order moment is bounded. Thus, we conclude as in \cite[Proof of Lemma 9.2.2]{bookcoagulation} that for every $T<T_{\textup{gel}}$
estimate \eqref{momentestimate} holds.
 \end{proof}
 \begin{rmk}\label{remark test will all moments}
     Due to Proposition \ref{all large moments finite}, we have that we can test equation (\ref{weak formulation equation solution}) with $\varphi(v)=v^{m}$ or $\varphi(v)=(v-R)^{m}_{+}$ for all times before $T_{\textup{gel}}$. This is since we can make use  of Lebesgue's dominated convergence theorem.
 \end{rmk}

 We are now in a position to derive a contradiction for the assumption that $T_{\textup{gel}}>0$.
 
 \begin{prop}\label{big time small time}
 Let $T<T_{\textup{gel}}$. There exist $R_{0}>0$, which depends on $T>0$,  and a constant $C>0$, which does not depend on $m$, such that 
     \begin{align}\label{eqref for contradiction}
         T\leq t +C\bigg(\int_{R}^{\infty}(v-R)^{m}f(v,t)\der v\bigg)^{-k_{m}},
     \end{align}
     for every $t\in[0,T]$, for every $m>2$, where $k_{m}=\frac{\gamma-1}{m-1}$, and for every $R\geq R_{0}$. 
 \end{prop}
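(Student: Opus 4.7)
The plan is to test the weak formulation \eqref{weak formulation equation solution} against $\varphi(v)=(v-R)_{+}^{m}$ and extract a closed superlinear differential inequality for the truncated moment $y(t):=M_{R,m}(f(t))=\int_{R}^{\infty}(v-R)^{m}f(v,t)\,\der v$, whose integration on $[t,T]$ yields \eqref{eqref for contradiction}. That such a test function is admissible on $[0,T]\subset [0,T_{\textup{gel}})$ follows from Proposition \ref{all large moments finite} and Remark \ref{remark test will all moments}.

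To lower bound the right-hand side of the weak formulation I restrict the integration to $(v,v')\in(0,R/2]\times[R,\infty)$, absorbing the factor $\tfrac12$ via the symmetry in $(v,v')$. On this region $\varphi(v)=0$, so $\Theta_{\varphi}(v,v')=(v+v'-R)^{m}-(v'-R)^{m}\geq m\,v\,(v'-R)^{m-1}$ by the mean value theorem applied to $x\mapsto x^{m}$ on $[v'-R,v+v'-R]$. For the kernel, since $v/(v+v')\leq (R/2)/(3R/2)=1/3$, the uniform bound $g(v/(v+v'))\geq G_{0}6^{-k}$ holds by \eqref{definition of g}, while for $R\geq H$ the sum $v+v'\geq H$ and \eqref{constant H} gives $h(v,v')\geq H_{0}(v^{\gamma}+v'^{\gamma})\geq H_{0}v'^{\gamma}$, so together $K(v,v')\geq c_{1}v'^{\gamma}$ with $c_{1}>0$ independent of $R$ and $m$. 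Finally, exactly as in the argument producing \eqref{lower bound jr}, Dini's monotone convergence theorem supplies an $R_{0}=R_{0}(T)>0$, independent of $m$, such that $\int_{0}^{R/2}v\,f(v,t)\,\der v\geq v_{0}/2$ for every $t\in[0,T]$ and every $R\geq R_{0}$.

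Combining these three ingredients and using $v'^{\gamma}\geq (v'-R)^{\gamma}$ on $\{v'\geq R\}$ produces
\[
\dot{y}(t)\geq c_{2}\,m\,v_{0}\,M_{R,m+\gamma-1}(f(t)),
\]
with $c_{2}>0$ independent of $m$, $R$ and $t$. The next step is a Hölder interpolation anchored on the first moment: writing $m=\theta\cdot 1+(1-\theta)(m+\gamma-1)$ with $\theta=(\gamma-1)/(m+\gamma-2)$ and applying Hölder's inequality to $M_{R,m}=\int(v-R)^{m}f$ gives
\[
M_{R,m+\gamma-1}\geq \frac{M_{R,m}^{(m+\gamma-2)/(m-1)}}{M_{R,1}^{(\gamma-1)/(m-1)}}\geq v_{0}^{-k_{m}}\,y^{\,1+k_{m}},
\]
since mass conservation on $[0,T]$ forces $M_{R,1}\leq v_{0}$. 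Because $k_{m}\in (0,\gamma-1]$ for $m\geq 2$, the factor $v_{0}^{1-k_{m}}$ is uniformly bounded in $m$, and absorbing it leaves $\dot{y}\geq c_{3}\,m\,y^{\,1+k_{m}}$ with $c_{3}>0$ independent of $m$.

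Integrating $-\tfrac{1}{k_{m}}\tfrac{\der}{\der t}y^{-k_{m}}\geq c_{3}m$ from $t$ to $T$ and using $y(T)>0$ (guaranteed by Proposition \ref{positivity}) yields $c_{3}\,m\,k_{m}\,(T-t)\leq y(t)^{-k_{m}}$; since $m k_{m}=m(\gamma-1)/(m-1)\geq \gamma-1$ for every $m\geq 2$, this is precisely \eqref{eqref for contradiction} with $C=[c_{3}(\gamma-1)]^{-1}$, independent of $m$. The main delicate point throughout is keeping \emph{all} constants uniform in $m$: this is why the interpolation must be anchored at $M_{R,1}\leq v_{0}$ rather than at $M_{R,0}$, and why the algebraic identity $m k_{m}\geq \gamma-1$ is essential. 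It is also the step in which the assumptions $\gamma>1$ and $T<T_{\textup{gel}}$ play their decisive role.
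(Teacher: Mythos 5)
Your proposal is correct and follows essentially the same route as the paper: test with $(v-R)_{+}^{m}$, restrict to $(0,R/2]\times[R,\infty)$ where $\Theta_{\varphi}\geq m\,v\,(v'-R)^{m-1}$ and $K\geq c\,v'^{\gamma}$, use Dini's theorem for the lower mass bound, interpolate via H\"older against $M_{R,1}\leq v_{0}$, and integrate the resulting superlinear inequality. The only (harmless) differences are cosmetic: you write the differential inequality in derivative rather than integral form and carry out the final integration explicitly with the observation $mk_{m}\geq\gamma-1$, where the paper instead invokes the argument of \cite[Lemma 9.2.5]{bookcoagulation}.
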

 \begin{proof}
 Due to Proposition \ref{all large moments finite}, we can test with $\varphi(v)=(v-R)^{m}_{+}$ in (\ref{weak formulation equation solution}) as mentioned in Remark \ref{remark test will all moments}. Let $0\leq s<t\leq T<T_{\textup{gel}}$, then we have
 \begin{align}\label{derive contradiction}
     \int_{R}^{\infty}(v-R)^{m}f(v,t)\der v&- \int_{R}^{\infty}(v-R)^{m}f(v,s)\der v\nonumber\\
     &=\frac{1}{2}\int_{s}^{t}\int_{(0,\infty)}\int_{(0,\infty)}K(v,v')f(v,\tau)f(v',\tau)\Theta_{\varphi}(v,v')\der v'\der v \der \tau\nonumber\\
     &\geq \frac{1}{2}\int_{s}^{t}\int_{(0,\frac{R}{2}]}\int_{[R,\infty)}K(v,v')f(v,\tau)f(v',\tau)\Theta_{\varphi}(v,v')\der v'\der v \der \tau,
 \end{align}
 since $\Theta_{\varphi}(v,v')\geq 0$ in this case. We have that $\varphi(v)=0$ in the region $(v,v')\in(0,\frac{R}{2}]\times[R,\infty)$ and thus (\ref{derive contradiction}) becomes
 \begin{align*}
     \int_{R}^{\infty}(v-R)^{m}&f(v,t)\der v- \int_{R}^{\infty}(v-R)^{m}f(v,s)\der v\\
     &\geq \frac{1}{2}\int_{s}^{t}\int_{(0,\frac{R}{2}]}\int_{[R,\infty)}K(v,v')f(v,\tau)f(v',\tau)[\varphi(v+v')-\varphi(v')]\der v'\der v \der \tau.
 \end{align*}
We have $(v+v'-R)^{m}-(v'-R)^{m}=m\int_{v'-R}^{v+v'-R}s^{m-1}\der s\geq m(v'-R)^{m-1}v, $ for $v'>R$ and $m>2$. Thus, we further deduce that
  \begin{align*}
     \int_{R}^{\infty}(v-R)^{m}&f(v,t)\der v- \int_{R}^{\infty}(v-R)^{m}f(v,s)\der v\\
     &\geq \frac{m}{2}\int_{s}^{t}\int_{(0,\frac{R}{2}]}\int_{[R,\infty)}K(v,v')f(v,\tau)f(v',\tau)(v'-R)^{m-1}v\der v'\der v \der \tau.
 \end{align*}
 From (\ref{rain kernel general}), (\ref{constant H}) and (\ref{definition of g}), we observe that $K(v,v')\geq Cv'^{\gamma}$ in the region $(v,v')\in(0,\frac{R}{2}]\times[R,\infty)$, if $R\geq H$, where $H$ is as in (\ref{constant H}), as proven in (\ref{lower bound kernel finite moment estimates}), which implies that
  \begin{align}
     \int_{R}^{\infty}(v-R)^{m}&f(v,t)\der v- \int_{R}^{\infty}(v-R)^{m}f(v,s)\der v\nonumber\\
     &\geq C m\int_{s}^{t}\int_{(0,\frac{R}{2}]}\int_{[R,\infty)}v'^{\gamma}f(v,\tau)f(v',\tau)(v'-R)^{m-1}v\der v'\der v \der \tau.\label{mass bounded below}
 \end{align}
 By Dini's monotone convergence theorem, as in (\ref{lower bound jr}), we have that there exists $R(T)>0$, such that, for all $R\geq R(T)$, we have that 
 \begin{align}\label{lower bound mass for gelation}
\int_{0}^{\frac{R}{2}}vf(v,t)\der v\geq\frac{v_{0}}{2}.
 \end{align} 
 Using this in (\ref{mass bounded below}), we obtain
 
   \begin{align*}
     \int_{R}^{\infty}(v-R)^{m}&f(v,t)\der v- \int_{R}^{\infty}(v-R)^{m}f(v,s)\der v\nonumber\\
     &\geq C m v_{0}\int_{s}^{t}\int_{[R,\infty)}v'^{\gamma}f(v',\tau)(v'-R)^{m-1}\der v' \der \tau\\
         &\geq C m v_{0}\int_{s}^{t}\int_{[R,\infty)}f(v',\tau)(v'-R)^{m+\gamma-1}\der v' \der \tau,
 \end{align*}
 for all $R\geq\max \{ R(T),H\}$, where $H$ is as in (\ref{constant H}) and $R(T)$ is as in (\ref{lower bound mass for gelation}). By H\"{o}lder's inequality, we obtain
 \begin{align}
     \int_{R}^{\infty}(v-R)^{m}f(v,t)\der v&\leq   \bigg(\int_{R}^{\infty}(v-R)^{m+\gamma-1}f(v,t)\der v \bigg)^{\frac{m-1}{m+\gamma-2}} \bigg(\int_{R}^{\infty}(v-R)f(v,t)\der v\bigg)^{\frac{\gamma-1}{m+\gamma-2}}\nonumber\\
     &\leq   \bigg(\int_{R}^{\infty}(v-R)^{m+\gamma-1}f(v,t)\der v \bigg)^{\frac{m-1}{m+\gamma-2}} v_{0}^{\frac{\gamma-1}{m+\gamma-2}}\label{holder holder}
 \end{align}
 and thus from (\ref{mass bounded below}) and (\ref{holder holder}), it follows that
    \begin{align}\label{desired behavior 1}
     \int_{R}^{\infty}(v-R)^{m}&f(v,t)\der v- \int_{R}^{\infty}(v-R)^{m}f(v,s)\der v\nonumber\\
     &\geq C m v_{0}v_{0}^{-\frac{\gamma-1}{m-1}}\int_{s}^{t}\bigg(\int_{[R,\infty)}f(v',\tau)(v'-R)^{m}\der v'\bigg)^{\frac{m+\gamma-2}{m-1}}\der \tau\nonumber\\
       &\geq C m v_{0}(1+v_{0})^{-(\gamma-1)}\int_{s}^{t}\bigg(\int_{[R,\infty)}f(v',\tau)(v'-R)^{m}\der v'\bigg)^{\frac{m+\gamma-2}{m-1}}\der \tau.
 \end{align}
 Denoting by $M_{R,m}:=\int_{R}^{\infty}(v-R)^{m}f(v,t)\der v$, we can now repeat the argument in \cite[Lemma 9.2.5]{bookcoagulation} to deduce that for every $0\leq t<T<T_{\textup{gel}}$, there exists $R(T)$ such that
 \begin{align}\label{desired behavior 2}
    T-t\leq C\frac{m-1}{m}M_{R,m}(t)^{-\frac{\gamma-1}{m-1}}\leq C M_{R,m}(t)^{-\frac{\gamma-1}{m-1}},
 \end{align}
 for every $R\geq R(T)$. The argument consists in using (\ref{desired behavior 1}) to find a suitable ODE, that gives us the desired behavior of $M_{R,m}$ in (\ref{desired behavior 2}). For this, notice that $\frac{m+\gamma-2}{m-1}>1$. 
 
 This concludes our proof.
 \end{proof}
 We are now in a position to prove Theorem \ref{theorem instantaneous gelation}.
 \begin{proof}[Proof of Theorem \ref{theorem instantaneous gelation}]
 Assume by contradiction that $T_{\textup{gel}}>0$ and let $0<T<T_{\textup{gel}}$. From  Corollary \ref{corollary infinity} we have that for every $t>0$, it holds that  $\liminf_{R\rightarrow\infty}\liminf_{m\rightarrow \infty} M_{R,m}(f(t))^{\frac{1}{m}}=\infty.$ Let $0<t<T<T_{\textup{gel}}$ in  Proposition \ref{big time small time} and then let first $m\rightarrow\infty$ and then $R\rightarrow\infty$ in \eqref{eqref for contradiction}. It follows that $T\leq t$, which gives the desired contradiction.
 \end{proof}

\subsection*{Acknowledgements}
The authors would like to thank B. Perthame for fruitful discussions about gelation theory and for explaining the arguments in \cite{gelfournier2025}. 

I.C. was supported by the European Research Council (ERC) under the European Union's Horizon 2020 research and innovation program Grant No. 637653, project BLOC ``Mathematical Study of Boundary Layers in Oceanic Motion'' and by the project BOURGEONS ``Boundaries, Congestion and Vorticity in Fluids: A connection with environmental issues '', grant ANR-23-CE40-0014-01 of the French National Research Agency (ANR).

B.N. and J.J.L.V. gratefully acknowledge the financial support of the collaborative
research centre The mathematics of emerging effects (CRC 1060, Project-ID 211504053) and of the Hausdorff Center for Mathematics (EXC 2047/1, Project-ID 390685813) funded through the Deutsche Forschungsgemeinschaft (DFG, German Research Foundation).

\subsubsection*{Statements and Declarations}

\textbf{Conflict of interest} The authors declare that they have no conflict of interest.

\textbf{Data availability} Data sharing not applicable to this article as no datasets were generated or analysed during the current study.

\printbibliography

\end{document}